\newcommand{\tr}{\mathrm{tr}}
\newcommand{\sgn}{\mathrm{sgn}}
 \newcommand{\FF}{\mathcal{F}}
 \newcommand{\AAA}{\mathcal{A}}
 \newcommand{\TT}{\mathcal{T}}
\newcommand{\tensor}{\otimes}
\newcommand{\from}{\colon}
\newcommand{\one}{\boldsymbol{1}}
\newcommand{\RR}{\mathbb{R}}
\newcommand{\defeq}{\colon=}
\newcommand{\adj}{\mathrm{adj}}
\newcommand*\samethanks[1][\value{footnote}]{\footnotemark[#1]}
\renewcommand{\div}{\operatorname{div}}
\newcommand{\tonelabeled}{
\begin{tikzpicture}[dloop]
\loopy{1}
\node[below=0.1mm of A1]{$i$};
\end{tikzpicture}
}
\newcommand{\tfivelabeled}{
\begin{tikzpicture}[dloop]
\loopy{2}
 \node[dloop black node,right=of A1](B){};
 \draw (A1) edge (B);
 \node[below=0.1mm of A2]{$i$};
 \node[below=0.1mm of A1]{$j$};
 \node[below=0.1mm of B]{$k$};
\end{tikzpicture}
}
 \newcommand{\commentout}[1]{}
\theoremstyle{plain}
\newtheorem{theorem}{Theorem}
\newtheorem{lemma}{Lemma}
\newtheorem{remark}{Remark}
\newbox\mybox
\def\centerfigure#1{%
    \setbox\mybox\hbox{#1}%
    \raisebox{-0.4\dimexpr\ht\mybox+\dp\mybox}{\copy\mybox}%
}
\newbox\mybox
\def\cfig#1{%
\hspace*{-1mm}
    \setbox\mybox\hbox{#1}%
    \raisebox{-0.3\dimexpr\ht\mybox+\dp\mybox}{\copy\mybox}%
\hspace*{-1mm}
}
\theoremstyle{definition}
\newtheorem{example}{Example}
\newtheorem{conjecture}{Conjecture}
\title{Using aromas to search for preserved measures and  integrals in Kahan's method}
\author{Geir Bogfjellmo%
  \thanks{Department of Mathematics, Norwegian University of Life Sciences, N-1430 Ås, Norway}
   \and
  Elena Celledoni%
  \thanks{Department of Mathematical Sciences, Norwegian University of Science
    and Technology, N-7491  Trondheim, Norway}
 \and
 Robert I McLachlan%
 \thanks{Institute of Fundamental Sciences, Massey University, Palmerston North,  New Zealand}   \and
 Brynjulf Owren%
  \samethanks[2] \and
  G R W Quispel%
  \thanks{Department of Mathematics and Statistics, La Trobe University, Melbourne,  Victoria 3086, Australia}
 }
\begin{document}
\maketitle
\begin{abstract}
 The numerical method of Kahan applied to quadratic differential equations is known to often generate integrable maps in low dimensions and can in more general situations exhibit preserved measures and integrals. Computerized methods based on discrete Darboux polynomials have recently been used for finding these measures and integrals. However, if the differential system contains many parameters, this approach can lead to highly complex results that can be difficult to interpret and analyze. But this complexity can in some cases be substantially reduced by using aromatic series. These are a mathematical tool introduced independently by Chartier and Murua and by Iserles, Quispel and Tse. We develop an algorithm for this purpose and derive some necessary conditions for the Kahan map to have preserved measures and integrals expressible in terms of aromatic functions. An important reason for the success of this method lies in the equivariance of the map from vector fields to their aromatic functions. We demonstrate the algorithm on a number of examples showing a great reduction in complexity compared to what had been obtained by a fixed basis such as monomials.
\end{abstract}

\renewcommand{\div}{\operatorname{div}}

\newbox\mybox
\def\centerfigure#1{%
    \setbox\mybox\hbox{#1}%
    \raisebox{-0.4\dimexpr\ht\mybox+\dp\mybox}{\copy\mybox}%
}

\newbox\mybox
\def\cfig#1{%
\hspace*{-1mm}
    \setbox\mybox\hbox{#1}%
    \raisebox{-0.3\dimexpr\ht\mybox+\dp\mybox}{\copy\mybox}%
\hspace*{-1mm}
}

\newbox\mybox
\def\cfiglab#1{%
    \setbox\mybox\hbox{#1}%
    \raisebox{-0.6\dimexpr\ht\mybox+\dp\mybox}{\copy\mybox}%
}





\bibliographystyle{amsplain}




\textbf{Keywords:} 
B-series methods, Integrability, Preservation of integrals and measures, Darboux polynomials, Trees, Aromatic Trees.

\section{Introduction}
In this paper we combine ideas from two apparently unrelated subfields of computational mathematics in order to obtain 
a new compact and equivariant way of characterizing certain preserved measures and integrals for the Kahan integration method. In the study of preserved measures and integrals of birational maps, the method of discrete Darboux polynomials has been employed recently in several works, see e.g. \cite{celledoni19udd,celledoni22dad}. On the other hand, there has been a recent interest in generalising the notion of B-series to include a larger set called aromatic series going back to \cite{chartier07pfi,iserles07bsm}.
Whereas the aromatic series have so far been used mostly for classification purposes and no-go theorems, we believe the present work is the first time these ideas have been used in a constructive manner to obtain specific objects such as preserved measures and integrals.

Kahan's method \cite{kahan93unm}, also known as the Hirota--Kimura discretization \cite{hirota00dot,kimura00dot}, is a numerical scheme 
which has received considerable attention in recent years for its remarkable numerical, geometric properties and for its ability to yield integrable discretisations when applied to a large class of integrable ordinary differential equations \cite{petrera11oio, petrera20hoc, celledoni14ipo}. 
For a quadratic system $\dot{x}=f(x)$ on $\mathbb{R}^n$, with $i$th component given as
$$
    \dot{x}_i  = f_i(x) = \sum_{j,k} a_{ijk}x_jx_k + \sum_j b_{ij} x_j + c_i,\ i=1,\ldots,n,
$$
letting $x_i\approx x_i(t_m)$ and $x'_i\approx x_i(t_{m+1})$, where $t_{m+1}-t_m=h$, the method takes the form
\begin{equation} \label{Kahan1}
     \frac{x_i'-x_i}{h} = \sum_{j,k} a_{ijk}\frac{x_jx_k'+x_j'x_k}{2}   + \sum_j b_{ij} \frac{x_j+x_j'}{2} + c_i,\ i=1,\ldots,n.
\end{equation}
Kahan's method is linearly implicit and it can be shown \cite{celledoni13gpo} that it has the form of a birational map, 
\begin{equation} \label{Kahan2}
   \frac{x'-x}{h} = (I-\frac{h}{2}f'(x))^{-1}f(x),
\end{equation}
thus it is well defined for sufficiently small values of the stepsize $h$. Petrera et al. \cite{petrera11oio}  applied the method of Kahan to a large number of integrable quadratic differential equations, showing that in several examples this yields a discrete integrable map.
In the sequel we shall refer to the map of $\mathbb{R}^n$, $\Phi_h: x\mapsto x'$ as the Kahan map.

Kahan's method has remarkable geometric properties when applied to quadratic vector fields, and its merits were discovered and analysed in several articles \cite{kahan97usf,sanzserna94aus,petrera11oio,celledoni10}. A fundamental property is that it is self adjoint, meaning that
 $\Phi_h^{-1}=\Phi_{-h}$. As we shall see later, this property also ensures that preserved measures are either even or odd in $h$.
In \cite{celledoni13gpo}, general expressions for a preserved integral and measure were found 
for the Kahan discretizations of systems with a cubic Hamiltonian and a constant Poisson bracket.
More precisely, for a system
\begin{equation} \label{Canonical}
        \dot{x} = J\nabla H(x) = f(x),
\end{equation}
with $H$ a cubic multivariate polynomial and $J$  a constant skew-symmetric matrix, the Kahan map has a preserved integral with the closed form expression
\begin{equation} \label{HtCanonical}
      \tilde{H}_h(x)  = H(x) + \frac{h}{3} \nabla H^T(x) (I-\frac{h}{2}f'(x))^{-1} f(x),
\end{equation}
and a preserved measure
\begin{equation} \label{pmCanonical}
    \frac{dx_1\wedge dx_2\wedge\cdots\wedge dx_n}{\det(I-\frac{h}{2}f'(x))}.
\end{equation}
In this example, the vector field is divergence-free and exact  flow preserves the standard volume form. In other examples, this need not be the case.
Several examples of integrable maps obtained with Kahan's method, including examples where $\div f\ne 0$ can be found in \cite{celledoni14ipo,petrera18nro}.

The Kahan map coincides with the Runge--Kutta method 
\begin{equation}
\label{RKformulation}
    \frac{x'-x}{h}=-\frac{1}{2}f(x)+2f\left(\frac{x+x'}{2}\right)-\frac{1}{2}f(x')
\end{equation}
restricted to quadratic vector fields in $\mathbb{R}^n$, \cite{celledoni13gpo}.
A powerful property of Runge--Kutta methods in general is that they are equivariant with respect to any affine transformation between two linear spaces, see
\cite{mclachlan16bsm} for a general discussion of affine equivariance in B-series methods.  The notion of aromas and aromatic series \cite{iserles07bsm,chartier07pfi}  is a generalisation of B-series, the aromatic series are indexed by graphs that may also include loops and each graph is called an aroma.  An aromatic function can be assigned to a vector field $f$ and an aroma, an example of such a function is the divergence, $\div(f)$.
The map from vector fields to aromatic functions is equivariant with respect to the affine group acting via pullbacks.
 In several examples, we have noticed that preserved measures and integrals of the Kahan method can be expressed in terms of aromatic functions. In this paper, we outline a method that can be used to search for preserved measures and integrals in terms of aromatic functions.  At present we can in principle determine all preserved measures whose density is of the form $1/P$ where $P$ can be expressed in terms of aromatic functions up to a prescribed order\footnote{All calculations are subject to complexity limitations of the computer algebra system used. Our present implementation uses aromas up to order 6, but this is not a limitation of the algorithm.}. 
 We provide several examples of its use where we show that such preserved measures can actually be determined.
 
 We also present a machinery for analysing this problem by adapting combinatorial methods treated in a general setting in \cite{iserles07bsm,bogfjellmo19aso,munthekaas16abs} to the case of Kahan's method. Finally, we present some necessary conditions on vector fields for Kahan's method to possess preserved measures and first integrals in the desired form of aromatic functions. 

To get an idea of this approach, consider first the preserved measure with (reciprocal) density function $\det (I-\frac{h}{2}f'(x))$ given by \eqref{pmCanonical}. By applying the Newton--Girard formula for symmetric polynomials, we find
$$
\det (I-\frac{h}{2}f'(x)) = P(r_1,\ldots,r_d)\quad \text{where}\ r_i = \text{Tr}(h^if'(x)^i),
$$
and where $P$ is some multivariate polynomial. Each term of this polynomial is an aromatic function. 

Writing density functions in terms of aromatic functions has some attractive properties. The density function of a preserved measure under the Kahan map is itself invariant under affine transformations of the underlying vector field. By using aromatic functions, the resulting expression is automatically affine equivariant. As a consequence, density functions of preserved measures take a much simpler form than would be the case if for instance a monomial basis had been used.

Furthermore, expressing the preserved measures and first integrals in terms of aromas highlights an interesting property: The preserved measures and first integrals thus obtained are related to functional dependencies between the derivatives of the vector field $f$. Specifically, if $\ker F_f=\ker F_{\tilde{f}}$ (see \eqref{eq: kerdef}) for two vector fields $f$ and $\tilde{f}$, then Kahan's method applied to the two vector fields has the same preserved measures and first integrals.

Although our findings show that not every preserved measure can be written in terms of aromatic functions, many important ones can. 
Naturally, the algorithm we propose can discover only measures that belong to the linear span of aromatic functions. 
But when successful this approach 
typically provides much simpler expressions for the measures with a transparent connection to the underlying ODE vector field, as opposed to what can be obtained using for instance a monomial basis.

Just to illustrate the power of the aromatic functions, we mention the inhomogenous Nambu system which we will discuss in detail in section~\ref{sec:examples}. By parametrizing this problem by means of a monomial basis our algorithm yields a density function with no fewer than 15806 terms. However, by using aromatic functions we can express this same function with only 7 terms.

\section{The method of Darboux polynomials} \label{sec:darboux}

Let $\Phi:\mathbb{R}^n\rightarrow\mathbb{R}^n$ be any map and suppose that there exist a function $C$ and a polynomial $P$ such that
\begin{equation} \label{darbouxdef}
    P\circ\Phi = C\cdot P,
\end{equation}
we then call $P$ a (discrete) Darboux polynomial, and $C$ is a corresponding cofactor. If there are two such Darboux polynomials, $P_1$ and $P_2$  with the same cofactor $C$, then clearly by \eqref{darbouxdef}
$$
\frac{P_1}{P_2}\circ\Phi = \frac{P_1\circ\Phi}{P_2\circ\Phi} = \frac{P_1}{P_2},
$$
so $\frac{P_1}{P_2}$ is a first integral of the map $\Phi$. Denote by $D\Phi$ the Jacobian of $\Phi$. If it so happens that $P$ is a Darboux polynomial with cofactor $C=\det D\Phi$, then 
$P$ is in fact the (reciprocal) density of a preserved measure, 
$$
  \frac{dx_1\wedge dx_2\wedge\cdots\wedge dx_n}{P}.
$$
Differentiating the Runge-Kutta representation of the Kahan map \eqref{RKformulation}, we easily obtain that $\det D\Phi_h$ is a rational function of the form 
\begin{equation} \label{detDphih}
\det D\Phi_h(x) = \frac{\det (I+\frac{h}{2}f'(\Phi_h(x)))}{\det(I-\frac{h}{2}f'(x))}.
\end{equation}
In \cite{celledoni19udd} a systematic approach for determining Darboux polynomials was proposed. The idea is to
first factor $\det D\Phi_h$ (over $\mathbb{Q}$) 
$$
    \det D\Phi_h(x) = \frac{\prod_i N_i(x)^{r_i}}{\prod_i D_i(x)^{s_i}},
$$
and then use these factors to form candidate cofactors
$$
    C(x) = \frac{\prod_i N_i(x)^{r_i'}}{\prod_i D_i(x)^{s_i'}},\quad r_i', s_i' \geq 0.
$$
For a fixed choice of cofactor $C(x)$, we let $P\in\mathbb{R}_p(x)$ be a multivariate polynomial of degree $p$ in the variables $x$ with real coefficients, which can be expressed in the basis $\{\mathbf{e}_k\}_{k=0}^N$,
\begin{equation} \label{dpansatz}
    P(x) = \sum_{k=0}^N P_k\mathbf{e}_k,
\end{equation}
where $N\leq\frac{(n+p)!}{n!p!}$. The basis elements $\mathbf{e}_k$ can for example be chosen to be monomials of the form $x_1^{i_1}\cdots x_n^{i_n}$ for non-negative integers $i_1,\ldots,i_n$. With this setup, equation \eqref{darbouxdef} is turned into a linear system of equations for the coefficients $P_k$ that can (in principle) always be solved by a finite algorithm.

In this paper, we shall always apply this method with $C =  \det D\Phi_h$, but rather than a monomial basis, we shall search for preserved measures in the linear span of certain functions called aromas introduced in the next section.

\section{Aromas and aromatic series}
Before describing aromas and aromatic series, we recall the related concept of
B-series \cite{butcher72, hairer06}.

Every Runge--Kutta method, and many other numerical methods for ODEs can be expanded in a series involving the vector field $f$ and its derivatives.

For example, the formulation of Kahan's method in Equation \eqref{Kahan2} can be expanded as a geometric series
\begin{equation} \label{Kahanseries}
 \Phi_h(x)=x+\sum_{k=0}^\infty \frac{h^k}{2^k} (f'(x))^k f(x).
\end{equation}

In general B-series are indexed by the set of rooted trees, and take the form
\[
\Phi_h(x)=x+\sum_{\tau \in\TT}\frac{h^{|\tau|}b(\tau)}{\sigma(\tau)}F(\tau)(x),
\]
where $\TT$ is the set of rooted trees, $|\tau|$ is the number of vertices in
$\tau$, $\sigma(\tau)$ is the symmetry coefficient of $\tau$, equal to the cardinality of the symmetry group
of $\tau$, and $F(\tau)$ is a vector field depending on $f$ and its derivatives. 
$b(\tau)$ are coefficients depending on the integrator.

Expanding $(I-hf'(x))^{-1}$ in a power series and substituting in \eqref{Kahan2}, it is easily seen that in the B-series \eqref{Kahanseries} of Kahan's method 
$b(\tau)$ is non-zero only on the tall trees:
\[\ab, \aabb, \aaabbb, \dotsc,\]
In fact these represent the vector fields
\[
F(\ab)= f,\quad F(\aabb)=f'f,\quad F(\aaabbb)=f'f'f,\quad  \dotsc.
\]
For the tall trees, $\sigma(\tau)=1$ and we can see from \eqref{Kahanseries}
that Kahan's method has coefficients
\[
  b(\tau)= \begin{cases}
    2^{1-|\tau|} & \text{if $\tau$ is a tall tree,}\\
    0 & \text{otherwise.}
    \end{cases}
  \]

For the purpose of simplifying certain combinatorial formulas, it is useful to extend the definition of $b$ to multisets of rooted trees (called rooted forests) by multiplication, i.e.
\[
b(\tau_1\tau_2\dotsm\tau_m)\defeq b(\tau_1)b(\tau_2)\dotsm b(\tau_m).
\]

In B-series, vector fields depending on $f$ and its derivatives are represented by rooted trees. 
The measure we aim to preserve has an associated scalar valued density function which we assume depends on the vector field $f$ as well as its derivatives. We aim to describe such scalar functions by means of aromas (or loopy trees) and aromatic series.

Aromas and aromatic series were originally introduced by Iserles, Quispel and
Tse \cite{iserles07bsm} and by Chartier and Murua \cite{chartier07pfi}
and their structure was investigated by Munthe-Kaas and Verdier \cite{munthekaas16abs} by
Bogfjellmo \cite{bogfjellmo19aso}, and by Laurent et al. \cite{laurent2023aromatic}

An aroma is a connected directed graph where each vertex has exactly one
outgoing edge.
It can be shown that an aroma has to contain exactly one cycle.

The smallest aromas are
\[
\tone, \ttwo, \tthree, \tfive, \tsix, \tseven, \dotsc.
\]
To simplify graphics, directions of edges are not shown unless they are necessary to distinguish between aromas. The edges are oriented so that the ring is a cycle and other edges are oriented towards the ring.

We will refer to the set of aromas as $\AAA'$ and the set of multisets
(products) of aromas as $\AAA$.
The empty multiset will be denoted by $\one$.

Given a vector field $f$, an aroma $\lambda$ represents a scalar function
$F(\lambda)$ 
according to the following procedure:
\begin{enumerate}
\item Label each node $i,j,\dotsc$.
\item For each node with label $j$, form the factor $f^j_{i_1i_2\dotsm i_m}$, where
$i_1,i_2,\dotsc,i_m$ are the labels of the nodes pointing towards node $j$.
The upper index on $f$ corresponds to vector components, and the lower to
partial derivatives with respect to coordinate directions, i.e.
$f^j_{i_1i_2\dotsm i_m}=\partial^m f^j/\partial x_{i_1} \partial x_{i_2}\dotsm \partial x_{i_m}$.
\item Finally, take the product of the factors and sum all terms using Einstein's
summation convention.
\end{enumerate}

The definition of $F$ extends without modification to multisets of aromas,
viewed as disjoint unions of connected graphs.
\begin{example}
  Some examples of $F$.
    \begin{align*}
    F(\one )&=1 \\
    F\big(\cfiglab{\tonelabeled}\big)&= \sum_i f^i_i = \div(f) \\
    F\big(\cfiglab{\tfivelabeled}\big)& = \sum_{ijk} f^i_j f^j_{ik} f^k
\end{align*}
\end{example}

The simplest aromas are the \emph{cyclic aromas},
\[ \tone,\tthree, \tseven,\tsixteen, \dotsc\]
whose images under $F$ are traces of powers of $f'$.

An aromatic series is a series indexed by $\AAA$.
We will normalize these series as
\[B(\gamma)\defeq \sum_{\alpha\in \AAA}\frac{h^{|\alpha|}\gamma(\alpha)}{\sigma(\alpha)}F(\alpha),\]
where $|\alpha|$ is the number of vertices in $\alpha$ and $\sigma(\alpha)$ is the cardinality of the symmetry group of the graph
$\alpha$.

\begin{example}
  Some examples of $\sigma$ are
  \[\sigma(\one)=1,\]
  \[
    \sigma\big(\centerfigure{\tfive}\big) = 1,
  \]
  \[\sigma\big(\centerfigure{\tseven}\big)= 3,\]
  \[
    \sigma\big(\centerfigure{\ttwentyfour}\big) = 8.
  \]
\end{example}
In several cases, Darboux polynomials for the discretization of a vector field
with Kahan's method are expressible as finite aromatic series.

\begin{example}[Hamiltonian vector fields]
Assume that Kahan's method is used to discretize a Hamiltonian vector field with a
cubic Hamiltonian. It follows from the results of \cite{celledoni13gpo} that
$P(x)=\det\left(I-\frac{h}{2}f'(x)\right)$ is a Darboux polynomial with cofactor $\det
  D\Phi$.
  
 For any fixed dimension, $P$ can be expressed as a polynomial of traces. For example, when $A$ is a $2\times 2$-matrix, we have the identity
  \[\det(I-A) = 1 - \tr(A) + \frac{1}{2}\left(\tr(A)^2-\tr(A^2)\right).\]
  
  Thus, when $d=2$,
  \[
    \begin{aligned}
      P &=\det\left(I-\frac{h}{2}f'\right)\\
        &= 1-\frac{h}{2}\tr(f')+\frac{h^2}{8}\left(\tr(f')^2-\tr((f')^2)
      \right)\\
        &= F(\one) -\frac{h}{2} F\big(\centerfigure{\tone}\big) + \frac{h^2}{8} \left(F\big(\centerfigure{\tone \tone}\big)- F\big(\centerfigure{\tthree}\big)\right).
      \end{aligned}
   \]
 \end{example}
We refer to Section~\ref{sec:examples} for a detailed study of examples of the use of this approach.

\subsection{Equivariance}
One strength of the aromatic approach is that the resulting preserved measures are obtained through an affine equivariant map on the space of quadratic vector fields.

The evolution of a differential equation $\dot{x}=f(x)$ on $\mathbb{R}^n$ does not depend on the coordinates we use on $\mathbb{R}^n$. If we limit coordinate changes to affine mappings, neither does Kahan's method. Consequently, if Kahan's method applied to a differential equation $\dot{x}=f(x)$ has a preserved measure, then an affine change of coordinates $x\mapsto Ax+b$ will result in a new differential equation and, when Kahan's method is applied, a new discrete dynamical system that also has a preserved measure. In coordinates, this measure will be different from the original, but it's expression in terms of aromas stays the same.

We formulate this independence of coordinates as equivariance.
Equivariance can be defined\footnote{A slightly different and stronger definition of equivariance was used in \cite{mclachlan16bsm}} in the category of $G$-sets, the collection of sets on which a common Lie group $G$ is acting.
Let $X_1$ and $X_2$ be two $G$-sets. A map $\Phi:X_1\rightarrow X_2$, is said to be \emph{equivariant} if it is a $G$-set morphism, i.e. for any $g\in G$ and $x\in X_1$, $\Phi(g\cdot x)=g\cdot\Phi(x)$.

In our setting, the group is the affine group, consisting of pairs $g=(A_g,b_g)$ where $A_g\in GL(n,\mathbb{R})$ and $b_g\in \mathbb{R}^n$. This group acts on $\mathbb{R}^n$ through $g\cdot x=A_g x+b_g$. It also acts on $\mathcal{X}(\mathbb{R}^n)$ and $\mathcal{F}(\mathbb{R}^n)$, the vector fields and functions on $\mathbb{R}^n$ respectively. The (right) action is by pullback, or more precisely, we have for $f\in\mathcal{X}(\mathbb{R}^n)$, $\varphi\in\mathcal{F}(\mathbb{R}^n)$ respectively
$$
   g\cdot f(x) = A_g^{-1}f(A_g x+b_g),\quad g\cdot\varphi(x) = \varphi(A_gx+b).
$$
\begin{lemma}
Let $F:\mathcal{X}(\mathbb{R}^n)\times\mathcal{A}\rightarrow \mathcal{F}(\mathbb{R}^n)$ be the map that assigns to a vector field $f$ and an aroma $a$ the aromatic function $F(f,a)$. Then, for any $a\in\mathcal{A}$, the map $F(\cdot,a):\mathcal{X}(\mathbb{R}^n)\rightarrow\mathcal{F}(\mathbb{R}^n)$ is equivariant with respect to the affine group. 
\end{lemma}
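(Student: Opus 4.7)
The plan is to unwind the definition of $F(g\cdot f,a)(x)$ via the labeling recipe, apply the chain rule to the derivatives of the pulled-back field $g\cdot f$, and then observe that the $A_g^{-1}$ and $A_g$ factors produced by the chain rule pair off along the edges of the aroma and cancel. What is left after cancellation is $F(f,a)$ evaluated at $A_g x+b_g$, which is by definition $(g\cdot F(f,a))(x)$. The extension from a single aroma to a general multiset $\alpha=a_1\cdots a_r\in\AAA$ is then immediate, since $F(\cdot,\alpha)=\prod_s F(\cdot,a_s)$ and the pullback action on scalar functions is multiplicative.

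Concretely, I would fix an aroma $a\in\AAA'$ with nodes carrying dummy indices $j_1,\dots,j_N$, and for each node $v$ let $c_1(v),\dots,c_{m_v}(v)$ denote the dummy indices of its children (the nodes with an edge pointing into $v$). By the labeling recipe defining $F$,
\[
F(g\cdot f,a)(x) \;=\; \sum_{j_1,\dots,j_N} \;\prod_{v} (g\cdot f)^{j_v}_{\,j_{c_1(v)}\cdots j_{c_{m_v}(v)}}(x).
\]
Using $(g\cdot f)(y)=A_g^{-1}f(A_g y+b_g)$, the chain rule yields
\[
(g\cdot f)^{j_v}_{\,j_{c_1(v)}\cdots j_{c_{m_v}(v)}}(x) \;=\; (A_g^{-1})^{j_v}_{k_v}\Bigl(\prod_{s=1}^{m_v}(A_g)^{\ell_{c_s(v)}}_{\,j_{c_s(v)}}\Bigr)\, f^{k_v}_{\,\ell_{c_1(v)}\cdots \ell_{c_{m_v}(v)}}(A_g x+b_g),
\]
so each node-factor contributes one $A_g^{-1}$ on its upper index and one $A_g$ per lower index.

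The key combinatorial point is that every edge of $a$ runs from some child $c$ to its unique parent $v$, because every vertex of an aroma has out-degree exactly one (which is true even of edges internal to the cycle). Such an edge contributes precisely two factors involving the summed index $j_c$: an $(A_g^{-1})^{j_c}_{k_c}$ from the upper slot of the factor at $c$, and an $(A_g)^{\ell_c}_{\,j_c}$ from the corresponding lower slot of the factor at $v$. Summing over $j_c$ collapses them via $(A_g^{-1})^{j_c}_{k_c}(A_g)^{\ell_c}_{\,j_c}=\delta^{\ell_c}_{k_c}$, which identifies $\ell_c$ with $k_c$. Performing this cancellation edge by edge leaves
\[
F(g\cdot f,a)(x)=\sum_{k_1,\dots,k_N}\prod_{v} f^{k_v}_{\,k_{c_1(v)}\cdots k_{c_{m_v}(v)}}(A_g x+b_g) \;=\; F(f,a)(A_g x+b_g) \;=\; (g\cdot F(f,a))(x),
\]
as required. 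The only real subtlety is the index bookkeeping: one has to verify that every dummy index appears in exactly one upper slot and exactly one lower slot so that each $A_g^{-1}$ is matched with a unique $A_g$. This pairing is forced by the defining property of aromas (out-degree one at every vertex), so I expect no difficulty beyond writing the contraction cleanly, and the case $a=\one$ (empty product equal to $1$) is handled trivially.
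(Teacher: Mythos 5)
Your proof is correct. The paper itself omits the proof of this lemma, deferring to similar results in \cite{munthekaas16abs}, and your argument is the standard direct verification one would supply there: the chain rule for the affine pullback produces one $A_g^{-1}$ per vertex (upper slot) and one $A_g$ per incoming edge (lower slot), and since every vertex of an aroma has out-degree exactly one, each dummy index occurs in exactly one upper and one lower slot, so the factors contract to identities edge by edge, leaving $F(f,a)(A_gx+b_g)$; the multiplicative extension to multisets and the trivial case $a=\one$ are handled correctly. No gaps.
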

\noindent The proof is omitted, and is based on similar results in  \cite{munthekaas16abs}.
\begin{lemma} \label{lemma36}
Let $P_f$ be the (reciprocal) density of a preserved measure for the Kahan map. Then $P_{g\cdot f}$ will be the (reciprocal) density of a preserved measure for Kahan's method applied to the vector field $g\cdot f$ for any affine transformation $g$.
\end{lemma}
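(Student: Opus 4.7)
The plan is to use the two ingredients that are already in place: (i) the equivariance of the aromatic function map $F(\cdot,a)$ established in the previous lemma, and (ii) the affine equivariance of the Kahan map itself, which follows from its Runge--Kutta representation \eqref{RKformulation} (see \cite{mclachlan16bsm}).

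First I would unpack what it means for $P_f$ to be a reciprocal density of a preserved measure. Writing $\Phi_h^f$ for the Kahan map of $f$, this is precisely the Darboux condition
\[
P_f\circ\Phi_h^f \;=\; P_f\cdot \det D\Phi_h^f.
\]
Moreover, since $P_f$ is assumed to be expressible as an aromatic function of $f$, the previous lemma gives
\[
P_{g\cdot f}(x) \;=\; F(g\cdot f,\cdot)(x) \;=\; (g\cdot F(f,\cdot))(x) \;=\; P_f(A_g x+b_g).
\]

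Next I would record the affine equivariance of the Kahan map. Since \eqref{RKformulation} is a Runge--Kutta method and Runge--Kutta methods are affine equivariant, one has
\[
\Phi_h^{g\cdot f} \;=\; g^{-1}\circ \Phi_h^f\circ g,
\]
in the sense that $\Phi_h^{g\cdot f}(x) = A_g^{-1}\bigl(\Phi_h^f(A_g x+b_g)-b_g\bigr)$. Differentiating this identity and using that $A_g$ is constant yields
\[
\det D\Phi_h^{g\cdot f}(x) \;=\; \det D\Phi_h^f(A_g x+b_g),
\]
so the cofactor transforms by pullback as well.

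Finally I would combine these to verify the Darboux identity for $g\cdot f$. Namely, applying the two formulas above gives
\[
P_{g\cdot f}\bigl(\Phi_h^{g\cdot f}(x)\bigr) \;=\; P_f\bigl(A_g\Phi_h^{g\cdot f}(x)+b_g\bigr) \;=\; P_f\bigl(\Phi_h^f(A_g x+b_g)\bigr),
\]
and using the Darboux condition for $P_f$ at the point $A_g x+b_g$ this equals
\[
P_f(A_g x+b_g)\cdot \det D\Phi_h^f(A_g x+b_g) \;=\; P_{g\cdot f}(x)\cdot \det D\Phi_h^{g\cdot f}(x),
\]
which is exactly the Darboux condition identifying $P_{g\cdot f}$ as the reciprocal density of a preserved measure for $\Phi_h^{g\cdot f}$. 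The only real subtlety is bookkeeping the pullback action consistently (distinguishing $g\cdot x$ from $g^{-1}\cdot x$ and making sure the factor $\det A_g$ that appears in the Jacobian cancels); beyond that, the argument is a direct chain of substitutions, so I do not expect a substantive obstacle.
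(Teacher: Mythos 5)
Your proposal is correct and fills in the details of exactly the argument the paper intends: the paper's proof is the one-line remark that the claim ``follows from \eqref{darbouxdef} and \eqref{detDphih}'', i.e.\ from checking that the Darboux identity and the Jacobian-determinant cofactor both transform consistently under the affine pullback, which is precisely your chain of substitutions. The only cosmetic difference is that you obtain $\det D\Phi_h^{g\cdot f}(x)=\det D\Phi_h^f(A_gx+b_g)$ from the Runge--Kutta equivariance of $\Phi_h$ rather than from the explicit formula \eqref{detDphih}; both routes are immediate and equivalent.
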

\noindent{\em Proof:} Follows from \eqref{darbouxdef} and  \eqref{detDphih}.

\subsection{Algebraic treatment}
We now return to Darboux polynomials, given by the equation \eqref{darbouxdef}. We restrict our attention to $\Phi$ being the Kahan map \eqref{Kahan1}, $C=\det D\Phi$ and $P$ an aromatic series.

For the Kahan map, we have that
\[
\det( D\Phi_h(x)) =
\frac{\det(I+\frac{h}{2}f'(\Phi_h(x)))}{\det(I-\frac{h}{2}f'(x))}=\frac{N_{\frac{h}{2}}\circ
\Phi_h(x)}{N_{-\frac{h}{2}}(x)},
\]
with $N_{\frac{h}{2}}(x)=\det (I+\frac{h}{2}f'(x))$.
Consequently, the Darboux equation \eqref{darbouxdef} for a Darboux polynomial
$P$, can be written as
\begin{equation}
  N_{-\frac{h}{2}}(x)\cdot P\big(\Phi_h(x)\big)-P(x)\cdot N_{\frac{h}{2}}\big(\Phi_h(x)\big)=0.
  \label{darbouxkahan}
\end{equation}
Our goal is to express this equation in terms of aromatic series. 
We are therefore interested in: 
\begin{enumerate} \item expressing functions of the form
$g_0(x)g_1(x)$, where both factors are aromatic series (multiplication),
\item expressing functions of the form
$g(\Phi(x))$ where $g$ is an aromatic series and $\Phi(x)$ is a B-series update map,
specifically the Kahan map (composition), and
\item expressing $N_{uh}(x)=\det(I+uhf')$ as an aromatic series.
\end{enumerate}

\subsubsection{Multiplication and composition}
The necessary combinatorial formulas can be deduced from the results in
\cite{bogfjellmo19aso}.

For expressing the formulas, it is convenient to define the free vector spaces
generated by $\AAA$ and $\FF$ over the field of reals,
$\RR\langle\AAA \rangle$ and $\RR\langle \FF \rangle$, their duals
$\RR\langle\AAA \rangle^\ast$, $\RR\langle \FF \rangle^\ast$ and tensor products
thereof.

Furthermore, we introduce the notation $\langle\cdot,\cdot \rangle$ for pairing
elements in dual and primal spaces, especially in the case of tensor products:
Let $U$ and $V$ be vector spaces, $a\in U^\ast$, $b\in V^\ast$ and $w\in U\tensor V$ where $w$ has the decomposition
$w=\sum_{i=1}^nu_i\tensor v_i$, then
\[\langle a\tensor b, w \rangle= \sum_{i=1}^n a(u_i)b(v_i).\]

The product of two aromatic series is simply a product of formal power series in
the variables $\alpha \in \AAA'$ with a normalization factor\footnote{To be
  precise, exponential power series in $\{ \frac {\alpha}{\sigma(\alpha)} \colon
  \alpha\in \AAA'\}$.}.
\begin{lemma}[Multiplication]
  Let
  $g_i(x)=\sum_{\alpha\in\AAA}\frac{h^{|\alpha|}\gamma_i(\alpha)}{\sigma(\alpha)}F(\alpha)(x)$,
  for $i=0,1$.
  Then
  \[
    g_0(x)g_1(x)=\sum_{\alpha\in\AAA} \frac{h^{|\alpha|} \gamma_0\cdot \gamma_1(\alpha)}{\sigma(\alpha)}F(\alpha)(x)
  \]
  where $\gamma_0\cdot \gamma_1(\alpha)=\langle  \gamma_0\tensor
\gamma_1,\Delta_{\sqcup}(\alpha)\rangle$,
and $\Delta_\sqcup\from \RR\langle  \AAA \rangle \to \RR\langle  \AAA
\rangle\tensor  \RR\langle  \AAA \rangle$ is the binomial coproduct \cite[Section V.2]{joni79cab},
\[\Delta_\sqcup(\alpha)=\sum_{\beta\subseteq \alpha} \beta \tensor (\alpha \setminus \beta),\]
where the sum is over all submultisets of the multiset $\alpha$, counting multiplicities.
\label{lem: mult}
\end{lemma}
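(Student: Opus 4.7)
The plan is to expand $g_0(x)g_1(x)$ as a double sum over $\AAA\times\AAA$, use the fact that $F$ is multiplicative under disjoint unions of graphs to reindex by the combined multiset, and then identify the resulting combinatorial coefficient with the pairing against the binomial coproduct. The only nontrivial ingredient is an identity relating the symmetry coefficient $\sigma$ of a forest to those of its submultisets.

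First I would write
\begin{equation*}
g_0(x)g_1(x)=\sum_{\alpha,\beta\in\AAA}\frac{h^{|\alpha|+|\beta|}\,\gamma_0(\alpha)\gamma_1(\beta)}{\sigma(\alpha)\sigma(\beta)}F(\alpha)(x)F(\beta)(x),
\end{equation*}
and invoke the fact that, by definition, $F$ on a multiset is built by contracting indices in each connected component, so $F(\alpha)F(\beta)=F(\alpha\sqcup\beta)$. Setting $\omega\defeq\alpha\sqcup\beta$, noting $|\omega|=|\alpha|+|\beta|$, and using the bijection between ordered decompositions $(\alpha,\beta)$ with $\alpha\sqcup\beta=\omega$ and submultisets $\alpha\subseteq\omega$ (via $\beta=\omega\setminus\alpha$), I would reindex to obtain
\begin{equation*}
g_0(x)g_1(x)=\sum_{\omega\in\AAA}h^{|\omega|}F(\omega)(x)\sum_{\alpha\subseteq\omega}\frac{\gamma_0(\alpha)\gamma_1(\omega\setminus\alpha)}{\sigma(\alpha)\sigma(\omega\setminus\alpha)}.
\end{equation*}

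The central step is the symmetry-coefficient identity. If $\omega$ has distinct connected aromas $a_1,\dotsc,a_s\in\AAA'$ with multiplicities $n_1,\dotsc,n_s$, its graph automorphism group is the iterated wreath product $\prod_i\mathrm{Aut}(a_i)\wr S_{n_i}$, giving $\sigma(\omega)=\prod_i\sigma(a_i)^{n_i}n_i!$. For a submultiset $\alpha\subseteq\omega$ with multiplicities $m_i\le n_i$ this factorizes cleanly as $\sigma(\omega)=\sigma(\alpha)\sigma(\omega\setminus\alpha)\binom{\omega}{\alpha}$, with $\binom{\omega}{\alpha}\defeq\prod_i\binom{n_i}{m_i}$. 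Under the natural identification $\RR\langle\AAA\rangle\cong\RR[\AAA']$ in which each connected aroma is a primitive variable, the binomial coproduct therefore takes the form $\Delta_\sqcup(\omega)=\sum_{\alpha\subseteq\omega}\binom{\omega}{\alpha}\,\alpha\tensor(\omega\setminus\alpha)$, which is exactly what the phrase ``submultisets counting multiplicities'' means in the statement. Substituting $1/(\sigma(\alpha)\sigma(\omega\setminus\alpha))=\binom{\omega}{\alpha}/\sigma(\omega)$ in the inner sum above identifies its value with $\langle\gamma_0\tensor\gamma_1,\Delta_\sqcup(\omega)\rangle/\sigma(\omega)=(\gamma_0\cdot\gamma_1)(\omega)/\sigma(\omega)$, which completes the proof. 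The main point to verify carefully is the wreath-product formula for $\sigma$ on a forest; everything else is direct algebra of formal series.
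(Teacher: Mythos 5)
Your proof is correct, and it is more than the paper itself provides: the paper states the lemma without proof, remarking only that ``the necessary combinatorial formulas can be deduced from the results in'' Bogfjellmo's paper on the algebraic structure of aromatic B-series. So where the paper defers to a cited general framework (the Hopf/comodule algebra of aromatic forests), you give a direct, self-contained verification. The three ingredients you isolate are exactly the right ones, and each checks out: (i) $F(\alpha)F(\beta)=F(\alpha\sqcup\beta)$ because the Einstein summation over a disconnected graph factors over its connected components; (ii) the reindexing of the double sum by $\omega=\alpha\sqcup\beta$, with ordered decompositions of $\omega$ in bijection with its distinct submultisets; and (iii) the symmetry-coefficient identity $\sigma(\omega)=\sigma(\alpha)\,\sigma(\omega\setminus\alpha)\binom{\omega}{\alpha}$, which follows from the wreath-product description $\mathrm{Aut}(\omega)\cong\prod_i \mathrm{Aut}(a_i)\wr S_{n_i}$ of the automorphism group of a disjoint union of graphs, so that $\sigma(\omega)=\prod_i\sigma(a_i)^{n_i}n_i!$. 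Your reading of ``submultisets counting multiplicities'' as $\Delta_\sqcup(\omega)=\sum_\alpha\binom{\omega}{\alpha}\,\alpha\tensor(\omega\setminus\alpha)$ over distinct submultisets is confirmed by the paper's worked example, where the terms $\tone\tensor\tone\ttwo$ and $\tone\ttwo\tensor\tone$ each carry coefficient $2=\binom{2}{1}$. The only point worth making explicit in a final write-up is that the rearrangement of the double series is legitimate because the series are graded by $|\alpha|$ and each homogeneous component is a finite sum. What your route buys is transparency and independence from the cited machinery; what the paper's route buys is uniformity, since the same coalgebraic results also deliver the companion composition formula of Lemma~\ref{lem: comp}.
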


\begin{example}
  \begin{align*}\Delta_{\sqcup}\big(\centerfigure{ \tone \tone \ttwo }\big)=&
     \one  \tensor \centerfigure{ \tone \tone \ttwo } + 2 \centerfigure{\tone} \tensor \centerfigure{\tone \ttwo} \\
  &+ \centerfigure{ \ttwo } \tensor \centerfigure{ \tone \tone }+ \centerfigure{ \tone \tone} \tensor \centerfigure{ \ttwo} \\
    &+2\centerfigure{ \tone \ttwo } \tensor \centerfigure{ \tone} + \centerfigure{ \tone \tone \ttwo} \tensor  \one.
  \end{align*}
\end{example}

The composition is given by \cite[Theorem 5.1]{bogfjellmo19aso}, with
appropriate restrictions, as we are only interested in composing a B-series and
an aromatic series.
\begin{lemma}[Composition]
Let
$g(x)=\sum_{\alpha\in\AAA}\frac{h^{|\alpha|}\gamma(\alpha)}{\sigma(\alpha)}F(\alpha)(x)$
and
$\Phi(x)=x+\sum_{\tau \in\TT}\frac{h^{|\tau|}b(\tau)}{\sigma(\tau)}F(\tau)(x)$.

Then
\[g(\Phi(x)) =\sum_{\alpha\in\AAA}\frac{h^{|\alpha|}(b \odot \gamma)(\alpha)}{\sigma(\alpha)}F(\alpha)(x)
\]
where
$(b\odot \gamma)(\alpha)= \langle  b\tensor \gamma, \Delta_\AAA(\alpha) \rangle$
and $\Delta_\AAA\from \RR \langle \AAA \rangle \to \RR\langle  \FF \rangle\tensor \RR\langle
\AAA \rangle$
is a left comodule map defined as follows:
For a graph $\alpha$, cut edges that are not included in the cycles to obtain
connected components, some of which are trees and some that are aromas, then sum
over all possible choices of edges to cut.
\label{lem: comp}
\end{lemma}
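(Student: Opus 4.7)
The plan is to proceed by Taylor expansion. Writing $y = \Phi(x)$ and $\Delta = y - x$, we have $\Delta = \sum_{\tau \in \TT} \frac{h^{|\tau|} b(\tau)}{\sigma(\tau)} F(\tau)(x)$, so for each aroma $\alpha$,
\[
F(\alpha)(y) = \sum_{k \geq 0} \frac{1}{k!} \partial_{i_1 \cdots i_k} F(\alpha)(x)\, \Delta^{i_1} \cdots \Delta^{i_k}.
\]
The key observation is how partial derivatives interact with the graphical definition of $F(\alpha)$: since $F(\alpha)$ is a contracted product of factors $f^j_{i_1 \cdots i_m}$, taking $\partial_i F(\alpha)$ yields, by the product rule, a sum over the vertices $v$ of $\alpha$ of aromatic functions associated with the graph obtained from $\alpha$ by attaching a new incoming edge at $v$ whose free end carries the label $i$. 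Contracting this free index with $\Delta^i$ and expanding $\Delta^i$ as a B-series converts each such added edge into an attached rooted tree $\tau$ grafted at its root.

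The next step is to reorganise the resulting triple sum, over $\alpha$, over $k$, and over ordered $k$-tuples of trees, by the isomorphism class of the final graph $\beta$. The cycles of $\beta$ are, by construction, precisely the cycles inherited from $\alpha$, so the grafted trees contribute only non-cyclic edges. Conversely, any way of cutting a subset of non-cyclic edges of $\beta$ yields a unique decomposition of $\beta$ into a rooted forest (the pieces hanging off the cuts) and a residual multi-aroma; this is exactly the coproduct $\Delta_{\AAA}(\beta) \in \RR\langle \FF \rangle \tensor \RR\langle \AAA \rangle$ described in the lemma. After pairing with $b$ on the forest side, extended multiplicatively as in the product case of Lemma~\ref{lem: mult}, and with $\gamma$ on the aroma side, one obtains the coefficient $\langle b \tensor \gamma, \Delta_{\AAA}(\beta)\rangle$ of $F(\beta)$ in $g(\Phi(x))$, up to a symmetry factor.

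The main obstacle is the symmetry factor bookkeeping: one must verify that the $1/k!$ from the Taylor expansion, the factors $1/\sigma(\tau_j)$ from each grafted tree, and $1/\sigma(\alpha)$ from the original aromatic series assemble into a single factor $1/\sigma(\beta)$. This reduces to an orbit-counting identity: ordered graftings of labelled trees $\tau_1,\ldots,\tau_k$ onto $\alpha$ producing a fixed isomorphism class $\beta$ are counted with multiplicity $k!\,\sigma(\alpha)\prod_j \sigma(\tau_j)/\sigma(\beta)$, via the orbit--stabiliser theorem applied to the natural action of $\mathrm{Aut}(\alpha) \times \prod_j \mathrm{Aut}(\tau_j) \times S_k$ on labelled graftings. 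Once this identity is in hand, the composition formula follows. The argument parallels the classical B-series composition proof, with the cycles of $\alpha$ playing a purely spectator role, which is why the coproduct $\Delta_{\AAA}$ leaves them untouched.
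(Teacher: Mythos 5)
The paper itself does not prove this lemma; it is imported from Theorem~5.1 of the cited work of Bogfjellmo ``with appropriate restrictions''. Your direct Taylor-expansion argument is therefore a self-contained route, and its overall strategy is the right one: expanding $F(\alpha)(x+\Delta)$, noting that each partial derivative attaches an incoming leg at an \emph{original} vertex of $\alpha$ (Leibniz rule over the factors $f^j_{i_1\dotsm i_m}$), and contracting with the B-series of $\Delta$ so as to graft rooted trees onto those vertices. The deferred symmetry-factor identity is the standard orbit--stabiliser count and checks out on small cases (e.g.\ grafting $\ab$ onto the two-cycle $\tthree$ to produce $\tfive$ gives a factor $\sigma(\tthree)/\sigma(\tfive)=2$, matching the two choices of attachment vertex).

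There is, however, one genuine gap, caused by taking the lemma's verbal description of $\Delta_\AAA$ at face value. Your forward construction only ever attaches trees \emph{directly} to vertices of $\alpha$, so in the resulting graph $\beta$ every grafted tree is joined to the residual aroma by a single edge; the decompositions that arise are exactly the ``one-layer'' (admissible) cuts, in which each detached tree component is adjacent to the aromatic part. They are \emph{not} arbitrary subsets of non-cycle edges, as your ``conversely'' sentence asserts. Concretely, let $\beta$ be the loop at a vertex $u$ with a two-edge tail $w\to v\to u$. Cutting both tail edges gives the term $\ab\,\ab\tensor\tone$, which under the all-subsets reading contributes $b(\ab)^2\gamma(\tone)$ to $\langle b\tensor\gamma,\Delta_\AAA(\beta)\rangle$; but grafting two copies of $\ab$ onto $\tone$ produces the graph in which $v$ and $w$ both point to $u$, not $\beta$, so the Taylor expansion yields only $\gamma(\beta)+b(\aabb)\gamma(\tone)+b(\ab)\gamma(\ttwo)$ as the coefficient of $F(\beta)$. (The paper's own Table~\ref{tab: Qsmall} confirms the admissible-cut reading: the entry for $\tsix$ is $\gamma(\ttwo)+\tfrac12\gamma(\tone)-\tfrac14\gamma(\one)$, whereas the all-subsets coproduct would produce $\gamma(\ttwo)+\tfrac32\gamma(\tone)-\tfrac34\gamma(\one)$.) To close your argument you must restrict the sum in $\Delta_\AAA$ to cuts in which no two cut edges lie on the same directed path into a cycle --- equivalently, sum over sub-multi-aromas containing all cycles and closed under outgoing edges, the aromatic analogue of the root-containing subtrees in Butcher's composition law. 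With that reading your bijection, and hence the lemma, are correct.
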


\begin{example}
 \[
    \Delta_{\AAA} \big( \centerfigure{\tseventeen } \big) = \one \tensor \centerfigure{\tseventeen }+ \ab \tensor \centerfigure{\tnine }
 \]
\end{example}

\subsubsection{The kernel of $F$}
For a given vector field $f$, $F$ will send some aromas or
linear combinations of aromas to zero. We call this the \emph{kernel of $F$}.
\begin{equation}
\ker F = \left\{k\in \RR\langle\AAA \rangle^\ast \text{ such that } \sum_{\alpha\in \AAA}\frac{h^{|\alpha|}k(\alpha)}{\sigma(\alpha)}F(\alpha)=0\right\}
\label{eq: kerdef}
\end{equation}
$\ker F$ is a description of functional dependencies between the derivatives of $f$.

Part of $\ker F$ is induced by the dimension $d$ of the surrounding space.
For example, when $d=1$, $F(\alpha)$ only depends on the number of
vertices in $\alpha$ with indegree
$0,1,2,\dotsc,$ and is independent of the exact arrangement of edges.
As an example in one dimension
$$F\big(\centerfigure{\tone\tone}\big)= F\big(\centerfigure{\tthree})=(f')^2.$$

In this article, we only consider quadratic vector fields.
These vector fields have all third derivatives equal to zero.
As a consequence, $F(\alpha)=0$ for all aromas containing a node with indegree
larger than or equal to 3.

Finally, specific classes of vector fields can have a larger kernel. 
For example, if $f$ is a divergence-free vector field, then $F(\alpha)=0$ is zero for all aromas that contain $\tone$ as a subgraph. 
Or if $f$ is an Hamiltonian vector field, then $F(\alpha)=0$ when $\alpha$ is
a cyclic aroma with an odd number of vertices.

The kernel of $F$ turns out to be crucial for the existence of solutions to
\eqref{darbouxdef} in the vector space spanned by aromas.

\subsubsection{Girard--Newton formula}
Recall that for the Kahan map,
\[
  \det D\Phi_h(x) = \frac{\det (I+\frac{h}{2}f'(x'))}{\det(I-\frac{h}{2}f'(x))},
\]
holds.

The expression $N_{uh}(x)=\det(I+uhf'(x))$ can be written as an aromatic series by
means of combinatorial formulas for symmetric polynomials, related to the
Girard--Newton formula \cite[Chapter 7]{stanley99ec2}.
\begin{theorem}
  The expression
 $N_{uh}(x)=\det(I+uhf')$ can be written as an aromatic series
 \begin{equation}
 \begin{aligned}
   \det(I+uhf') &= \sum_{\alpha\in \AAA} \frac{h^{|\alpha|}\eta_u(\alpha)}{\sigma(\alpha)}F(\alpha)\\
                &=1   + uhF(\cfig{\tone}) + \frac{u^2 h^2}{2} \left(F(\cfig{\tone\tone})-F(\cfig{\tthree})\right)+\dotsb
    \end{aligned}
   \label{eq: Newton}
 \end{equation}
In the above expression, $\eta_u$ is only non-zero if $\alpha$ is a product of
cyclic aromas, in which case
$\eta_u(\alpha)=\sgn(\pi_\alpha)u^{|\alpha|},$
where $\pi_\alpha$ is the permutation on $|\alpha|$ elements defined by the
graph $\alpha$.

Furthermore, for a fixed dimension $d$, the terms of the aromatic series
\eqref{eq: Newton} with $|\alpha|>d$ sum to zero.
\label{thm: girardnewton}
\end{theorem}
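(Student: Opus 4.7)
The plan is to apply the classical identity $\det(I+A)=\exp(\tr\log(I+A))$ to $A=uhf'(x)$ and expand everything into the aromatic basis.

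First I would write $\log(I+A)=\sum_{k\ge 1}\frac{(-1)^{k+1}}{k}A^k$, take the trace, and observe that $\tr((f')^k)=F(c_k)$, where $c_k$ denotes the cyclic aroma with $k$ vertices. This yields
\[
\det(I+uhf')=\exp\!\left(\sum_{k\ge 1}\frac{(-1)^{k+1}(uh)^{k}}{k}\,F(c_k)\right).
\]

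Next I would expand the exponential in the usual way as a sum over multi-indices $(m_k)_{k\ge 1}$ of nonnegative integers with only finitely many nonzero terms. Using the multiplicative definition $F(c_k^{m_k})=F(c_k)^{m_k}$ extended to multisets, each term corresponds to an $\alpha=\prod_k c_k^{m_k}$, i.e.\ a product of cyclic aromas, giving
\[
\det(I+uhf')=\sum_{(m_k)}\prod_k\frac{1}{m_k!}\left(\frac{(-1)^{k+1}(uh)^k}{k}\right)^{m_k}F(\alpha).
\]
Matching against the target normalization $\frac{h^{|\alpha|}\eta_u(\alpha)}{\sigma(\alpha)}$, the factor $h^{|\alpha|}u^{|\alpha|}$ arises because $\sum_k k m_k=|\alpha|$; the sign $\prod_k(-1)^{(k+1)m_k}=\prod_k(-1)^{(k-1)m_k}$ is precisely $\sgn(\pi_\alpha)$ since a $k$-cycle has sign $(-1)^{k-1}$; and the denominator $\prod_k k^{m_k}m_k!$ is exactly the order $\sigma(\alpha)$ of the automorphism group of the multiset $\alpha$ (cyclic rotation within each $c_k$, together with permutations of equal copies). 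This identifies $\eta_u(\alpha)=\sgn(\pi_\alpha)u^{|\alpha|}$ on products of cyclic aromas, and $\eta_u(\alpha)=0$ otherwise because no other aromas appear in the expansion.

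For the final statement, I would appeal to the elementary fact that $\det(I+uhf'(x))$ is a polynomial in $uh$ of degree at most $d$ (the ambient dimension), since the characteristic-polynomial-type expansion gives $\det(I+uhf')=\sum_{k=0}^{d}(uh)^k e_k(f')$. Since the aromatic series groups its contributions by $|\alpha|$ and produces a $(uh)^{|\alpha|}$ factor in each term, all contributions with $|\alpha|>d$ must cancel after summation, even though the individual $F(\alpha)$ need not vanish.

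The bookkeeping I expect to need the most care with is the combinatorial identification of $\sigma(\alpha)=\prod_k k^{m_k}m_k!$ with the factor $\prod_k m_k!\cdot k^{m_k}$ produced by the exponential; this is essentially the classical cycle-index identity for $S_n$, but writing it cleanly in the aromatic notation (where a single cycle already carries its own rotational symmetry and the multiset carries the extra $m_k!$) is the one step where the conventions must be matched exactly. The sign computation and the dimension truncation are then immediate.
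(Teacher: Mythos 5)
Your proof is correct, but it reaches the Girard--Newton identity by a different route than the paper. The paper writes $\det(I+uhf')=\prod_{i=1}^d(1+uh\lambda_i)$ in terms of the eigenvalues of $f'$, expands this as elementary symmetric polynomials $e_k(\lambda)$, and then cites Stanley's Proposition 7.7.6 to convert each $e_k$ into a signed sum of power sums $p_\mu=\prod_j \tr((f')^{\mu_j})$ weighted by $z_\mu^{-1}=\bigl(\prod_k k^{m_k}m_k!\bigr)^{-1}$, which are exactly the cyclic-aroma products with the stated signs and symmetry factors. You instead derive that same cycle-index identity from scratch via $\det(I+A)=\exp(\tr\log(I+A))$ and a direct expansion of the exponential; your bookkeeping of the sign $\prod_k(-1)^{(k-1)m_k}=\sgn(\pi_\alpha)$ and of $\sigma(\alpha)=\prod_k k^{m_k}m_k!$ (rotations of each directed $k$-cycle times permutations of equal copies) is exactly right and is consistent with the paper's examples $\sigma$ of the $3$-cycle equal to $3$ and of the double $2$-cycle equal to $8$. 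What the paper's route buys is brevity and the avoidance of any convergence or formality discussion, since it works with a finite product over $d$ eigenvalues; what your route buys is self-containedness, at the small cost that $\exp\tr\log$ must be read as a formal power series identity in $uh$ (or justified on a neighbourhood of $A=0$ and extended by polynomiality), a point worth one sentence in a final write-up. The truncation argument for $|\alpha|>d$ is identical in both proofs.
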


\begin{proof}
 Write $\det(I+uhf')= \prod_{i=1}^d(1+uh\lambda_i)$, where $\lambda_i$ are
 eigenvalues of $f'$.
 The first claim follows from writing out
 \[\det(I+uhf')=1+uh\sum_i \lambda_i + u^2h^2\sum_{i<j} \lambda_i\lambda_j +
   \dotsb + u^dh^d \prod_i \lambda_i,\]
 and applying \cite[Proposition 7.7.6]{stanley99ec2} to each term.
 The second claim follows from the fact that $\prod_{i=1}^d(1+uh\lambda_i)$ is a polynomial in $uh$ of degree $d$.
\end{proof}

\subsection{Darboux polynomials and aromatic series} \label{subsec:DarbouxAromatic}

We make the ansatz that $P$ can be written as an aromatic series $P=B(\gamma)$. 

By combining Lemma \ref{lem: mult}, Lemma \ref{lem: comp} and Theorem \ref{thm: girardnewton}, the
left hand side of \eqref{darbouxkahan} can then also be written as an aromatic series.

If $P(x)=B(\gamma)=\sum_{\alpha\in
  \AAA}\frac{h^{|\alpha|}\gamma(\alpha)}{\sigma(\alpha)}F(\alpha)$, then
\begin{equation}
N_{-\frac{h}{2}}(x)\cdot P\big( \Phi_h(x)\big)-P(x)\cdot N_{\frac{h}{2}}\big( \Phi_h(x)\big)= \sum_{\alpha\in
  \AAA}\frac{h^{|\alpha|}\langle Q(\gamma), \alpha \rangle}{\sigma(\alpha)}F(\alpha)
\label{eq: Qmeaning}
\end{equation}
where
\begin{equation}
\langle Q(\gamma), \alpha\rangle= \big\langle \eta_{-\frac{1}{2}}\tensor \phi \tensor \gamma- \gamma\tensor \phi
\tensor \eta_{\frac{1}{2}}, (I \tensor \Delta_\AAA )\circ \Delta_\sqcup(\alpha)  \big\rangle
\label{eq:Qdef}
\end{equation}

If $\langle Q(\gamma),\alpha\rangle$ were 0 for all $\alpha$, then $B(\gamma)$ would be a Darboux polynomial for all $f$.
This requirement is very strict and only satisfied  when $B$ is zero for all non-empty aromas, i.e. $B(\gamma)$ is a constant function.

We are looking for Darboux polynomials for a given $f$, in which case, the requirement is
\begin{equation}
  Q(\gamma)\in \ker F
  \label{eq:abstractequation}
\end{equation}
where $\ker F$ is as in \eqref{eq: kerdef}.

This means the set of aromatic series Darboux polynomials is a function of $\ker F$, that is the
\emph{linear dependencies between the aromas of $f$}.

In the following, we will develop necessary conditions on $\ker F$ such that \eqref{eq:abstractequation} has
nontrivial solutions.

\begin{example}
To illustrate the calculations of $Q(\gamma)$, we here display a detailed
calculation of 
$\langle Q(\gamma),\centerfigure{\tfive}\rangle$.

\begin{paragraph}{Step one:}
\[\Delta_{\sqcup}(\centerfigure{\tfive}) = \one \tensor \centerfigure{\tfive} +
  \centerfigure{\tfive }\tensor \one .
\]
\end{paragraph}
\begin{paragraph}{Step two:}
\[\Delta_{\AAA}(\one) = \one \tensor \one\quad \text{and}\quad  \Delta_{\AAA} \big(\centerfigure{\tfive }\big)=
  \one \tensor \centerfigure{\tfive }+ \ab \tensor \centerfigure{\tthree }.\]
Therefore
\begin{align*}
  (I \tensor \Delta_\AAA )\circ \Delta_\sqcup(\centerfigure{\tfive}) &= \one \tensor \Delta_{\AAA} \big(\centerfigure{\tfive }\big)+ \centerfigure{\tfive }\tensor \Delta_{\AAA}(\one)\\
                                                  &= \one \tensor \one \tensor \centerfigure{\tfive} + \one\tensor \ab \tensor \centerfigure{\tthree } + \centerfigure{\tfive } \tensor \one \tensor \one.
\end{align*}
\end{paragraph}
\begin{paragraph}{Step three:}
\begin{align*}
  \langle \eta_{-\frac{1}{2}}&\tensor \phi \tensor \gamma- \gamma\tensor \phi
  \tensor \eta_{\frac{1}{2}}, (I \tensor \Delta_\AAA )\circ \Delta_\sqcup(\alpha)  \rangle\\
 =&\big\langle\eta_{-\frac{1}{2}}\tensor \phi \tensor \gamma- \gamma\tensor \phi
 \tensor \eta_{\frac{1}{2}}, \one \tensor \one \tensor \centerfigure{\tfive } + \one\tensor \ab \tensor \centerfigure{\tthree } + \centerfigure{\tfive } \tensor \one \tensor \one  \big\rangle\\
 =&\eta_{-\frac{1}{2}}(\one)\phi(\one)\gamma( \centerfigure{\tfive })-\gamma(\one)\phi(\one)\eta_{\frac{1}{2}}( \centerfigure{\tfive })+
    \eta_{-\frac{1}{2}}(\one)\phi(\ab)\gamma( \centerfigure{\tthree })\\
  &-\gamma(\one)\phi(\ab)\eta_{\frac{1}{2}}( \centerfigure{ \tthree })+\eta_{-\frac{1}{2}}( \centerfigure{\tfive })\phi(\one)\gamma(\one)-\gamma( \centerfigure{\tfive })\phi(\one)\eta_{\frac{1}{2}}(\one)\\
 =& 1\cdot 1\cdot \gamma( \centerfigure{\tfive })-\gamma(\one)\cdot 1\cdot 0+1\cdot 1\cdot
 \gamma( \centerfigure{ \tthree})-\gamma(\one)\cdot 1\cdot (-\frac{1}{4})+0\cdot 1 \cdot
    \gamma(\one)\\
  &-\gamma( \centerfigure{\tfive })\cdot 1\cdot 1\\
 =& \gamma( \centerfigure{\tthree })+\frac{1}{4}\gamma(\one).
\end{align*}
\end{paragraph}
\end{example}
\begin{table}[htb]
\begin{tabular}{c|c}
  $\alpha$ & $\langle Q(\gamma), \alpha \rangle$\\
  \hline
  $\one$ & $0$\\
  $\tone$ & $-\gamma(\one)$\\
  $\ttwo$ & $\gamma( \centerfigure{\tone })-\frac{1}{2}\gamma(\one)$\\
  $\tthree$ & $0$\\
  $\tfour$ & $-2\gamma(\centerfigure{\tone})$ \\
  $\tfive$ & $\gamma(\centerfigure{\tthree })+\frac{1}{4}\gamma(\one)$\\
  $\tsix$ & $\gamma(\centerfigure{\ttwo})+\frac{1}{2}\gamma(\centerfigure{\tone})-\frac{1}{4}\gamma(\one)$\\
  $\tseven$ & $-\frac{1}{4}\gamma(\one)$\\
  $\teight$ & $\gamma( \centerfigure{\tfour})-\gamma(\centerfigure{\ttwo})-\gamma(\centerfigure{\tone})-\frac{1}{4}\gamma(\one)$\\
  $\tnine$ & $-\gamma(\centerfigure{\tthree})+\frac{1}{4}\gamma(\one)$\\
  $\tten$ & $-3\gamma(\centerfigure{\tfour})-\frac{1}{4}\gamma(\one)$
\end{tabular}\vspace*{3mm}
\caption{Calculations of $\langle Q(\gamma), \alpha \rangle $ for $|\alpha |\le 3$, see \eqref{eq: Qmeaning} and \eqref{eq:Qdef}.}
\label{tab: Qsmall}
\end{table}
Table \ref{tab: Qsmall} shows $\langle Q(\gamma), \alpha \rangle$ for all aromas
with $|\alpha|\le 3$.
Using the expressions in Table \ref{tab: Qsmall}, we can express the series
$B(Q(\gamma))$ in terms of $\gamma$ and $F$.

For $B(\gamma)$ to be a Darboux polynomial, $B(Q(\gamma))$ has to be equal to
zero, and specifically, each homogenous (in $h$) part has to equal zero.

In the following discussion, we assume that $B(\gamma)$ is a Darboux
polynomial and determine which consequences this has for $\gamma$ and $F$.

The Kahan map is self-adjoint.
As a consequence, linearly independent Darboux polynomials have to be symmetric
in $h$ (up to a sign change.)
We can therefore simplify our analysis by considering two disjoint classes of
$\gamma$:
\begin{enumerate}
  \item $\gamma$ that are non-zero only on $\alpha$ with an even number of
    vertices;
  \item $\gamma$ that are non-zero only on $\alpha$ with an odd number of
    vertices.
\end{enumerate}

\subsubsection*{Case 1: $\gamma$ non-zero for even $|\alpha|$}
We start with $\gamma$ that are non-zero only on $\alpha$ with even number of
vertices.

The
$\mathcal{O}(h)$ term in $B(Q(\gamma))$ is 
\[\big \langle  Q(\gamma), \centerfigure{\tone} \big\rangle F(\centerfigure{\tone}) = -\gamma(\one)F\big( \centerfigure{\tone }\big).\]
By our ansatz, this is equal to zero, and therefore
$\gamma(\one)=0$ or
$F( \centerfigure{ \tone })=\div f=0$.

This condition is an obvious consequence of the fact that the leading term of the Darboux polynomial defines a
preserved quantity for the continuous system. 
Specifically, there can only be Darboux polynomial with leading term $\gamma(\one)\ne 0$, if $\div f=0$, which is equivalent to $dx_1\wedge dx_2\wedge\cdots\wedge dx_n$ being a preserved quantity for the exact system.

For the $\mathcal{O}(h^2)$ term, we have
\[\begin{aligned}
 \big\langle  Q(\gamma),& \centerfigure{\ttwo}\big \rangle
 F(\centerfigure{\ttwo}) +\frac{1}{2}\big\langle Q(\gamma),
 \centerfigure{\tthree}\big\rangle F(\centerfigure{\tthree})\\
 &+\frac{1}{2}\big
 \langle Q(\gamma), \centerfigure{\tfour} \big \rangle F(\centerfigure{\tfour})
 \\
 =&\left( \gamma(\centerfigure{\tone})-\frac{1}{2}\right)F(\centerfigure{\ttwo})+0+\frac{1}{2}\left(-2\gamma(\centerfigure{\tone})\right)F(\centerfigure{\tfour})\\
 =&-\frac{1}{2}\gamma(\one)F(\centerfigure{\ttwo}),
 \end{aligned}
\]
where the final equality is due to our assumption that $\gamma(\alpha)$ is zero
when $|\alpha|$ is odd.
By our ansatz, this expression has to be equal to zero, which indicates that
either $\gamma(\one)=0$ or $F(\centerfigure{\ttwo})=D(\div f)\cdot
f=0.$

If $\div f=0$, then its derivatives are also zero, so this condition is already contained in the condition $\gamma(\one)F( \centerfigure{ \tone })=0$.

For the $\mathcal{O}(h^3)$ term, we consider the two subcases separately.

\subsubsection*{Subcase 1a: $\gamma$ non-zero for even $|\alpha|$, $\div f=0$.}

First, assume $F(\centerfigure{\tone})=0$, and (without loss of generality) that
$\gamma(\one)=1$.

All aromas containing $\tone$ as a subgraph are in $\ker F$, so we can
disregard all these terms.

With this simplification, the third order term in $B(Q(\gamma)$ is
\begin{equation}
  \begin{aligned}
    \big \langle Q(\gamma),&\centerfigure{\tfive}\big \rangle F(\centerfigure{\tfive})+\frac{1}{3}\big\langle Q(\gamma),\centerfigure{\tseven}\big \rangle F(\centerfigure{\tseven})\\
    =&\left(\gamma\big(\centerfigure{\tthree}\big)+\frac{1}{4}\right)F\big(\centerfigure{\tfive}\big)-\frac{1}{12}F\big(\centerfigure{\tseven}\big)
  \end{aligned}
\end{equation}
This can only be zero if there is a linear dependence between $F\big(\centerfigure{\tfive}\big)$ and $F\big(\centerfigure{\tseven}\big)$, specifically $F\big( \centerfigure{\tseven }\big)=\alpha F\big(\centerfigure{\tfive }\big)$ for some $\alpha\in \RR.$

We sum up the observation in a theorem.
\begin{theorem}
  If $\div f=0$ and there exists a Darboux polynomial in aromas $B(\gamma)$ with $\gamma(\one)=1$, then
  \[
    F\big( \centerfigure{\tseven }\big)=\alpha F\big(\centerfigure{\tfive }\big).
  \]
  for some constant $\alpha\in \RR$.
  Furthermore, either both
  $F\big(\centerfigure{\tseven}\big)$ and $F\big(\centerfigure{\tfive}\big)$ are equal to zero, or
  \[
    \gamma\big(\centerfigure{\tthree}\big)=\frac{\alpha-3}{12}.
  \]
  \label{thm: cond1}
\end{theorem}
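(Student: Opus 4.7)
The plan is to finish the $\mathcal{O}(h^3)$ analysis of $B(Q(\gamma))$ that the preceding subsection has already set up. I would first exploit Kahan's self-adjointness to reduce to the case where $\gamma$ is supported only on aromas with an even number of vertices: the map $h\mapsto -h$ decomposes $B(\gamma)$ into two parts, each itself a Darboux polynomial, and the hypothesis $\gamma(\one) = 1$ selects the even part (since $|\one| = 0$). With this reduction, every entry in Table~\ref{tab: Qsmall} involving $\gamma(\centerfigure{\tone})$ drops out.

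Next, I combine $\div f = 0$ with the observation from the discussion of $\ker F$ that every aroma containing $\centerfigure{\tone}$ as a subgraph is killed by $F$. Among the six three-vertex aromas of Table~\ref{tab: Qsmall}, this leaves only $\centerfigure{\tfive}$ and the cyclic aroma $\centerfigure{\tseven}$. Reading off the two relevant rows of the table and dividing by the symmetry coefficients $\sigma(\centerfigure{\tfive}) = 1$ and $\sigma(\centerfigure{\tseven}) = 3$, the $\mathcal{O}(h^3)$ piece of $B(Q(\gamma))$ collapses to
\[
   \Bigl(\gamma(\centerfigure{\tthree}) + \tfrac{1}{4}\Bigr)F(\centerfigure{\tfive}) - \tfrac{1}{12} F(\centerfigure{\tseven}).
\]

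The Darboux condition $Q(\gamma)\in \ker F$, combined with the fact that the Darboux equation must hold for all small $h$ with $x$ an independent variable, forces this $x$-dependent expression to be identically zero. A two-way case split on $F(\centerfigure{\tfive})$ then finishes: if $F(\centerfigure{\tfive})\equiv 0$, the relation forces $F(\centerfigure{\tseven})\equiv 0$ as well, giving the degenerate branch of the theorem. If $F(\centerfigure{\tfive})$ is not identically zero, setting $\alpha := 12\gamma(\centerfigure{\tthree}) + 3$ yields the proportionality $F(\centerfigure{\tseven}) = \alpha F(\centerfigure{\tfive})$ together with $\gamma(\centerfigure{\tthree}) = (\alpha-3)/12$. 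No step looks difficult; the main subtlety is the bookkeeping of which three-vertex aromas survive, which the setup has already reduced to a small enumeration.
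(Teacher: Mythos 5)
Your proposal is correct and follows essentially the same route as the paper: reduce to the even-supported part of $\gamma$ via self-adjointness, discard all aromas containing $\cfig{\tone}$ using $\div f=0$, and read off the surviving $\mathcal{O}(h^3)$ contribution $\bigl(\gamma(\cfig{\tthree})+\tfrac14\bigr)F(\cfig{\tfive})-\tfrac1{12}F(\cfig{\tseven})$ from Table~\ref{tab: Qsmall}, from which the dichotomy and the value $\gamma(\cfig{\tthree})=(\alpha-3)/12$ follow exactly as in the paper's Subcase~1a.
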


\subsubsection*{Subcase 1b: $\gamma$ non-zero for even $|\alpha|$, $\gamma(\one)=0$.}
We now consider the case where $\gamma(\one)=0$.
By the expressions in Table \ref{tab: Qsmall}, the third order term is
\begin{equation}
  \begin{aligned}
    &\gamma\big(\centerfigure{\ttwo}\big)\left( F\big(\centerfigure{\tsix}\big)
    -F\big(\centerfigure{\teight}\big)\right)\\
  &+\gamma\big(\centerfigure{\tthree}\big)\left( F\big(\centerfigure{\tfive}\big)
    -\frac{1}{2}F\big(\centerfigure{\tnine}\big)\right)\\
  &+\gamma\big(\centerfigure{\tfour}\big)\left( F\big(\centerfigure{\teight}\big)
    -\frac{1}{2}F\big(\centerfigure{\tten}\big)\right).
\end{aligned}
\label{eq: fcond3}
\end{equation}
The expression is equal to $D(p)\cdot f-p\div f$, where
\[
p=\gamma\big(\centerfigure{\ttwo}\big)F\big(\centerfigure{\ttwo}\big)+\frac{\gamma\big(\centerfigure{\tthree}\big)}{2}F\big(\centerfigure{\tthree}\big)+\frac{\gamma\big(\centerfigure{\tfour}\big)}{2}F\big(\centerfigure{\tfour}\big).
\]
And we see that \eqref{eq: fcond3} is equal to zero if and only if
$\frac{dx_1\wedge dx_2\wedge\cdots\wedge dx_n}{p}$ is a preserved measure for
the continuous system $\dot{x}=f(x)$.

This holds in general: If $B(\gamma)$ is a solution to \eqref{darbouxdef}, then
the leading term of $B(\gamma)$ defines a preserved measure of the continuous system.

\subsubsection*{Case 2: Case 1: $\gamma$ non-zero for odd $|\alpha|$}
We now consider the case where $\gamma$ is nonzero only for $\alpha$ with odd
number of vertices.

In this case the $\mathcal{O}(h)$ term is automatically zero. 
For the $\mathcal{O}(h^2)$ term, we get
\begin{equation}
\gamma(\centerfigure{\tone})\left(  F(\centerfigure{\ttwo})-F(\centerfigure{\tfour})\right)=0.
\label{eq: Oh2}
\end{equation}
This implies that either $\gamma(\centerfigure{\tone})=0$ or
\begin{equation}
  F(\centerfigure{\ttwo})=F(\centerfigure{\tfour}).
\label{eq: fcond2}
\end{equation}
Equation \eqref{eq: fcond2} is equivalent to $\div f^2=D(\div f)\cdot f$
which again is equivalent to the measure
$\frac{dx_1\wedge
  dx_2\wedge\cdots\wedge dx_n}{\div f}$ being a preserved measure for the
continuous system $\dot{x}=f(x)$

For the $\mathcal{O}(h^3)$ term, assuming either
$\gamma(\centerfigure{\tone})=0$ or
$F(\centerfigure{\ttwo})=F(\centerfigure{\tfour})$ makes the term simplify to zero.

\section{Examples} \label{sec:examples}
In this section we shall demonstrate the suggested approach on a number of well-known examples. 
All experiments are performed with the computer algebra system Maple.
The algorithm takes any quadratic vector field as input and 
the Kahan map and its Jacobian determinant are computed.
Then the set of all aroma functions up to a specified order is calculated, where each aroma function is now a multivariate polynomial. Next, a maximal linearly independent subset is selected, say $\{\mathbf{e}_k\}_{k=0}^N$. Now, the sought Darboux polynomial is expressed as in the ansatz \eqref{dpansatz}, and the condition \eqref{darbouxdef} yields a system of linear equations for the coefficients $\{P_k\}_{k=0}^N$ of \eqref{dpansatz}.
One may find $d\geq 0$ solutions and thus $d$ linearly independent Darboux polynomials which are density functions of measures preserved by the Kahan method for the given quadratic vector field.

\commentout{
\subsection{The $3D$ periodic Volterra chain} \label{subsec:Volterra}
Consider the problem
\begin{align*}
\dot{x} &= x (y - z) \\
\dot{y} &= y (z - x) \\
\dot{z} &= z (x - y)
\end{align*}
There are invariants
$$
   J_1 = x+y+z,\quad J_2=xyz
$$
Using the standard approach, we find the densities
\begin{align*}
g_1 =& 1 - \frac{h^2}{8} F(\centerfigure{\tthree}) \\
g_2=&4F(\centerfigure{\televen}) - 4F(\centerfigure{\ttwelve})
+F(\centerfigure{\tsixteen})
+h^2 \bigg(
F(\centerfigure{\tthirtyseven})\\&-F(\centerfigure{\tthirtysix}) 
+F(\centerfigure{\tfortythree})-\frac14 F(\centerfigure{\tfiftyseven}) 
\bigg)
\end{align*}
so this allows for one extra first integral in addition to $J_1$.

Relating to Theorem \ref{thm: cond1}, we here have a Darboux polynomial $g_1$
with $\gamma(\one)=1$.
And we can check that $F(\centerfigure{\tseven})=0$, so the conclusion in the
theorem holds with $\alpha=0$. 
Furthermore, we have
$\gamma(\centerfigure{\tthree})=-\frac{1}{4}=\frac{0-3}{12}$, in accordance with
Theorem \ref{thm: cond1}.

\subsection{The dressing chain}
This divergence free system is given as
\begin{align*}
\dot{x}  &= -y^2+z^2-b+c \\ 
\dot{y}&=  x^2-z^2+a-c \\
\dot{z}&= -x^2+y^2-a+b
\end{align*}
where $a, b$ and $c$ are free parameters. It has the following invariants 
\begin{align*}
J_1 &= x+y+z \\
J_2 &= (x+y)(y+z)(z+x)-ax-by-cz
\end{align*}
The linear invariant $J_1$ is automatically preserved by the Kahan map.
We find the following densities of preserved measures
\begin{align*}
  g_1 =& 1-\frac{h^2}{8} F(\centerfigure{\tthree}) \\
  g_2 =& 4 F(\centerfigure{\televen}) - 4  F(\centerfigure{\ttwelve})  +  F(\centerfigure{\tsixteen}) 
  +h^2\bigg( F(\centerfigure{\tthirtyseven}) \\
  & -F(\centerfigure{\tthirtysix}) + F(\centerfigure{\tfortythree}) -\frac14 F(\centerfigure{\tfiftyseven})\bigg)
 \end{align*}
 It is interesting to note that the aroma expressions are exactly the same as in the periodic $3D$ Volterra chain  presented in \ref{subsec:Volterra}. The reason is that these two systems are linked via a linear transformation.
We find
$$
\begin{array}{lcl}
\tilde{x} &=& x+y \\
\tilde{y} &=& x+z \\   
\tilde{z} &=& y+z
\end{array}  \quad\Rightarrow\quad
\begin{array}{lcl}
\dot{\tilde{x}} &=& \tilde{x}( \tilde{y} - \tilde{z}) + a - b\\
\dot{\tilde{y}} &=&   \tilde{y}(  \tilde{z} - \tilde{x}) + c - a \\   
\dot{\tilde{z}} &=& \tilde{z}(  \tilde{x} - \tilde{y}) + b - c
\end{array} 
$$

\subsection{The generalised Ishii system}
In \cite{celledoni14ipo} a generalised version of the Ishii system\cite{ishii90ppa} was analysed
\begin{align*}
\dot{x}  &= -c_2 x + b_2y + b_3 z  \\ 
\dot{y}&= c_1x + c_2 y + c_3 z \\
\dot{z}&=  a_{11}x^2 + a_{12} xy + a_{22} y^2
\end{align*}
The Kahan method preserves volume exactly if the parameters satisfy two conditions, see \cite{celledoni14ipo}.
Alternatively, one may express the parameters $a_{ij}$  as
\begin{equation}\label{eq:aeq}
a_{11} = k A_2c_3,\quad a_{12} = -k(A_1c_3+A_2b_3),\quad a_{22}=k A_1 b_3,
\end{equation}
where $k$ is an arbitrary parameter.  Here 
 \begin{equation*}
 A_1=b_2c_3-b_3c_2,\quad A_2=c_2c_3+b_3c_1,\quad A_3=-(b_2c_1+c_2^2),
 \end{equation*}
 The continuous system thus obtained has the invariants
 \begin{eqnarray} 
H_1 &= z+\frac{k}2(c_3\,x-b_3\, y)^2,   \label{eq:genexinv1} \\
H_2&=\frac{k}3\, \left( c_{3}x-b_{3}y \right)^{3}+\frac{c_1}{2}\,{x}^{2}+c_{2}\,xy+c_{3}\,xz-\frac{b_2}{2}\,{y}^{2}-b_{3}\,yz.
 \label{eq:genexinv2}
\end{eqnarray}
and preserved invariants have been shown to be
 \begin{eqnarray*}
 \widetilde{H}_1 &= z + \frac{k}2(c_3\,x-b_3\, y)^2 - \frac{k h^2}{8}\big( A_2\,x - A_1\,y \big)^2, \\
 \widetilde{H}_2 &= H_2 + \frac{h^2}{24}\bigg(A_3(-c_1 x^2-2c_2xy-2c_3xz+2b_3yz+b_2y^2)+(A_1c_3-A_2b_3)z^2\\
 & +k(-2c_3A_2^2 x^3+2A_2(b_3A_2+2c_3A_1)x^2 y -2A_1(c_3A_1+2b_3A_2)xy^2+2b_3A_1^2 y^3)\bigg).
 \end{eqnarray*}
Using the aroma approach, we find the parameter independent measures
\begin{align*}
g_1=& 1 \\
g_2=& F(\centerfigure{\tsixteen}) - 4 F(\centerfigure{\ttwelve})-\frac{h^2}{2} F(\centerfigure{\tfortytwo})
\end{align*}
This shows that the volume is exactly preserved by Kahan's method and there is another measure with density $g_2$ also preserved, and $g_2$ is therefore also a first integral. It turns out that $g_2$ only depends on
$ \widetilde{H}_1$, we have in fact
$$
g_2 = 2 A_3^2 + 4 k (A_1c_3-A_2 b_3)^2 \widetilde{H}_1
$$

To relate the preserved measures given by $g_1$ to Theorem \ref{thm: cond1}, a calculation
shows that for this system, 
\[
  F\big(\centerfigure{\tseven}\big)=3F\big(\centerfigure{\tfive}),
\]
and we have
\[
  \gamma\big(\centerfigure{\tthree})=0,
\]
in accordance with the theorem.
}

\subsection{Homogeneous Nambu system}
Consider the following system in $\mathbb{R}^3$, $\mathbf{x}=[x,y,z]^T$ analysed in \cite{celledoni14ipo},
$$
   \dot{\mathbf{x}} = \nabla H_1(\mathbf{x})\times\nabla H_2(\mathbf{x}),
$$
where $H_1$ and $H_2$ are quadratic homogeneous polynomials in $(x,y,z)$, i.e. for symmetric
$3\times 3$-matrices $A$ and $B$, one has $H_1(\mathbf{x})= \mathbf{x}^TA\mathbf{x}$ and
$H_2(\mathbf{x})= \mathbf{x}^T B \mathbf{x}$. $H_1$ and $H_2$ are first integrals of the system.
It was found in \cite{celledoni14ipo} that with
$$
     C=A\cdot \adj(B)\cdot A,\quad H_3(x) =\mathbf{x}^TC\mathbf{x},
$$
the corresponding Kahan map has a preserved measure with reciprocal density function
$$
\bar{g}_1 = (1+4h^2H_3)^2
$$
and two modified first integrals
$$
\tilde{H}_1=\frac{H_1}{1+4h^2 H_3},\quad
\tilde{H}_2=\frac{H_2}{1+4h^2 H_3}.
$$
Another reciprocal density function independent of $h$  is obtained as
$$
\bar{g}_2 = \bar{g}_1 \tilde{H}_1\tilde{H}_2 =  H_1 H_2
$$
and a $h$-independent first integral is obtained as $H_1/H_2$.

Using our method of Darboux polynomials and aromas, we find
that two linearly independent density functions are
\begin{align*}
   g_1 &= 1 - \frac{h^2}{12} F(\cfig{\tthree})+\frac{h^4}{96} F(\cfig{\televen}), \\[2mm]
   g_2&= F(\cfig{\tsixteen})-3 F(\cfig{\televen}),
\end{align*}
so their ratio is a first integral. It can be convenient to replace $g_1$ by
$\tilde{g}_{1}= g_1 + \frac{1}{288}h^4 g_2$, i.e.
$$
\tilde{g}_{1}= 1 - \frac{h^2}{12} F(\cfig{\tthree}) + 
\frac{h^4}{288} F(\cfig{\tsixteen}).
$$
For any divergence free vector field in $\mathbb{R}^3$ it holds that
$$
F(\cfig{\tsixteen}) = \frac{1}{2}\left( F(\cfig{\tthree})\right)^2
$$
so that in fact we have
$$
\tilde{g}_{1} = \left(1 - \frac{h^2}{24} F(\cfig{\tthree})\right)^2.
$$
It is possible to prove that $F(\cfig{\tthree})=32 \,x^TCx$ where $C$ is given as
$$
C=\adj(\adj(A)+\adj(B)) - \adj(\adj(A)) - \adj(\adj(B)) - B\cdot\adj(A)\cdot B- A\cdot \adj(B)\cdot A.
$$
Note that $\adj(\adj(A)) = \det(A)\cdot A$. In any case, we see here that $C$ is symmetric in the arguments $A$ and $B$, an issue discussed in \cite{celledoni14ipo}.

Regarding the second measure $g_2$, it is a homogeneous, quartic, $h$-independent polynomial.
Recalling above the $h$-independent quartic density function $\bar{g}_2=H_1H_2$ and the rational first integral
$\tilde{H}:=H_1/H_2$, it seems plausible that our $g_2$ is of the form
$$
  g_2 = \bar{g}_2\cdot P(\tilde{H}),\quad P(z) = \alpha z+ \beta + \frac{\gamma}{z}.
$$
A calculation in Maple shows that there is a unique solution for $\alpha, \beta, \gamma$, but their expressions are rather complicated.

\subsection{A generalised Lotka--Volterra system and the dressing chain}
Consider the problem
\begin{equation}
\begin{aligned}
\dot{x} &= x(\beta z-\gamma y), \\
\dot{y} &= y(-\alpha z+\gamma x), \\
\dot{z} &= z(\alpha y-\beta x).
\end{aligned}
\label{eq: LV}
\end{equation}
Clearly $I_0=x+y+z$ is a first integral of the continuous system for any choice of $\alpha, \beta, \gamma$. Since $I_0$ is linear, it is preserved by any RK method, in particular by the Kahan method.

\subsubsection{Divergence free case $\alpha =\beta=\gamma=1$}  \label{subsec:LVDF}
In this case, the ODE has a second preserved integral  $I_1=xyz$ .
Using the standard approach, we find the densities
\begin{align*}
g_1 =& 1 - \frac{h^2}{8} F(\cfig{\tthree}), \\
g_2=&4F(\cfig{\televen}) - 4F(\cfig{\ttwelve})
+F(\cfig{\tsixteen})
+h^2 \bigg(
F(\cfig{\tthirtyseven})\\&-F(\cfig{\tthirtysix}) 
+F(\cfig{\tfortythree})-\frac14 F(\cfig{\tfiftyseven}) 
\bigg),
\end{align*}
so this allows for one first integral. However, augmenting the basis of aroma functions with e.g.
$I_0 F(t)$ for $|t|\leq 6$, we can get two extra density functions
$$
g_3=I_0 g_1,\quad g_4=I_0 g_2,
$$
and we actually recover the general integral $I_3=I_0=\frac{g_3}{g_1}$ yielding integrability.

The dressing chain system reads
\begin{align*}
\dot{x}  &= -y^2+z^2-b+c, \\ 
\dot{y}&=  x^2-z^2+a-c, \\
\dot{z}&= -x^2+y^2-a+b,
\end{align*}
where $a, b$ and $c$ are free parameters. Invariants of this system are
\begin{align*}
J_1 &= x+y+z, \\
J_2 &= (x+y)(y+z)(z+x)-ax-by-cz.
\end{align*}
We can do the same basis augmentation as before, adding
functions of the form $J_1 F(t)$, and we obtain preserved measures with densities
\begin{align*}
  g_1 =& 1-\frac{h^2}{8} F(\cfig{\tthree}), \\
  g_2 =& 4 F(\cfig{\televen}) - 4  F(\cfig{\ttwelve})  +  F(\cfig{\tsixteen}) 
  +h^2\bigg( F(\cfig{\tthirtyseven}) \\
  & -F(\cfig{\tthirtysix}) + F(\cfig{\tfortythree}) -\frac14 F(\cfig{\tfiftyseven})\bigg),\\
  g_3 =& J_1 g_1, \\
  g_4 =& J_1 g_2.
\end{align*}
It is interesting to note that the aroma expressions are exactly the same as in the Lotka-Volterra divergence free example. The reason is that these two systems are linked via a linear transformation, see Lemma~\ref{lemma36}.

With $\alpha=\beta=\gamma=1$ in \eqref{eq: LV}, we have the transformation
$$
\begin{array}{lcl}
x &=& \tilde{x}+\tilde{z} \\
y &=& \tilde{x}+\tilde{y} \\   
z &=& \tilde{y}+\tilde{z}
\end{array}
\quad \Rightarrow \quad 
\begin{array}{lcl}
\dot{\tilde{x}} &=& -\tilde{y}^2+\tilde{z}^2 \\
\dot{\tilde{y}} &=&  \tilde{x}^2-\tilde{z}^2 \\   
\dot{\tilde{z}} &=& -\tilde{x}^2+\tilde{y}^2.
\end{array} 
$$

\subsubsection{The case $\alpha=\beta=1, \gamma=-1$}
This is the following particular case of \eqref{eq: LV}
\begin{equation}
\begin{aligned}
\dot{x} &= x(y + z), \\
\dot{y} &= -y( x + z), \\
\dot{z} &= z( y - x).
\end{aligned}
\label{eq: LVspes}
\end{equation}
which is also investigated in \cite{celledoni19udd}.
We get 
\begin{align*}
g_1 =& -2 F(\cfig{\tthree})+ F(\cfig{\tone}\cfig{\tone}), \\
g_2 =& 4 F(\cfig{\ttwelve}) - 2F(\cfig{\tsixteen})-2F(\cfig{\tone}\cfig{\tfive})+F(\cfig{\tone}\cfig{\tseven}), \\
g_3 =& 2 F(\cfig{\televen}) - 2 F(\cfig{\tone}\cfig{\tfive}) +  F(\cfig{\tone}\cfig{\tone}\cfig{\ttwo}), \\
g_4 =& -8F(\cfig{\tthirtysix}) + 8F(\cfig{\tfortythree})
-2F(\cfig{\tfiftyseven})  +4  F(\cfig{\tone}\cfig{\ttwentysix})\\
& - 4F(\cfig{\tone}\cfig{\ttwentyeight})+F(\cfig{\tone}\cfig{\tthirtyfive}) ,\\
g_5=& 2 F(\cfig{\tthirtysix}) - 4 F(\cfig{\tthirtyeight}) + 4 F(\cfig{\tone}\cfig{\ttwentysix}) 
- 2 F(\cfig{\tone}\cfig{\ttwentyeight})  \\
&+ F(\cfig{\tone}\cfig{\tone}\cfig{\ttwelve}) - 2 F(\cfig{\tone}\cfig{\tone}\cfig{\tfourteen}).
\end{align*}
In this case one may form first integrals e.g. by
$$
   I_i = \frac{g_{i+1}}{g_1},\quad i=1,2,3,4,
$$
but only two of them, e.g. $I_1, I_2$, are functionally independent. One checks that $I_1=(x+y+z)^2=I_0^2$ (the square of the general invariant given above).
All the measures are independent of the step size $h$. We write out the list of densities and invariants:
\begin{align*}
g_1 &= -4z^2, \\
g_2 &= g_1 I_1, \\
g_3 &=16 xy (x+z)(y+z), \\
g_4 &= g_1 I_1^2, \\
g_5 &= g_3 I_1,\\[1mm]
I_2 &= - \frac{4xy(x+z)(y+z)}{z^2}, \\
I_3 &= I_1^2, \\
I_4 &= I_1 I_2.
\end{align*}


\subsection{The generalised Ishi problem}
We find in \cite{celledoni14ipo} the problem
\begin{align*}
\dot{x}  &= -c_2 x + b_2y + b_3 z,  \\ 
\dot{y}&= c_1x + c_2 y + c_3 z, \\
\dot{z}&=  a_{11}x^2 + a_{12} xy + a_{22} y^2.
\end{align*}
The Kahan method preserves volume exactly if the parameters satisfy two conditions, see \cite{celledoni14ipo}.
Alternatively, one may express the parameters $a_{ij}$  as
\begin{equation}\label{eq:aeq}
a_{11} = k A_2c_3,\quad a_{12} = -k(A_1c_3+A_2b_3),\quad a_{22}=k A_1 b_3,
\end{equation}
where $k$ is an arbitrary parameter.  Here 
 \begin{equation*}
 A_1=b_2c_3-b_3c_2,\quad A_2=c_2c_3+b_3c_1,\quad A_3=-(b_2c_1+c_2^2).
 \end{equation*}
 The continuous system thus obtained has the invariants
 \begin{align} 
H_1 &= z+\frac{k}2(c_3\,x-b_3\, y)^2,   \label{eq:genexinv1} \\
H_2&=\frac{k}3\, \left( c_{3}x-b_{3}y \right)^{3}+\frac{c_1}{2}\,{x}^{2}+c_{2}\,xy+c_{3}\,xz-\frac{b_2}{2}\,{y}^{2}-b_{3}\,yz.
 \label{eq:genexinv2}
\end{align}
Preserved invariants of the Kahan map have been shown to be
 \begin{align*}
 \widetilde{H}_1 &= z + \frac{k}2(c_3\,x-b_3\, y)^2 - \frac{k h^2}{8}\big( A_2\,x - A_1\,y \big)^2, \\
 \widetilde{H}_2 &= H_2 + \frac{h^2}{24}\bigg(A_3(-c_1 x^2-2c_2xy-2c_3xz+2b_3yz+b_2y^2)+(A_1c_3-A_2b_3)z^2\\
 & +k(-2c_3A_2^2 x^3+2A_2(b_3A_2+2c_3A_1)x^2 y -2A_1(c_3A_1+2b_3A_2)xy^2+2b_3A_1^2 y^3)\bigg),
 \end{align*}
 see \cite{celledoni14ipo}.
Using the aroma approach, we find the parameter independent measures
\begin{align*}
g_1=& 1, \\
g_2=& F(\cfig{\tsixteen}) - 4 F(\cfig{\ttwelve})-\frac{h^2}{2} F(\cfig{\tfortytwo}).
\end{align*}
This shows that the volume is exactly preserved by Kahan's method and there is another measure with density $g_2$ also preserved, and $g_2$ is therefore also a first integral. It turns out that $g_2$ only depends on
$ \widetilde{H}_1$, we have in fact
$$
g_2 = 2 A_3^2 + 4 k (A_1c_3-A_2 b_3)^2 \widetilde{H}_1
$$


\subsection{The inhomogeneous Nambu system}

We consider two quadratic functions
\begin{align*}
H(\mathbf{x}) &= \mathbf{x}^T\mathbf{H}\mathbf{x} + \mathbf{h}^T\mathbf{x}, \\
K(\mathbf{x}) &= \mathbf{x}^T\mathbf{K}\mathbf{x} + \mathbf{k}^T\mathbf{x} ,
\end{align*}
where $\mathbf{H}$ and $\mathbf{K}$ are arbitrary symmetric $3\times 3$ matrices and
 $\mathbf{h}$ and $\mathbf{k}$ are vectors in $\mathbb{R}^3$. The ODE we consider is
 \begin{equation} \label{inhomNambu}
 \dot{\mathbf{x}} = \nabla H(\mathbf{x})\times   \nabla K(\mathbf{x}).
 \end{equation}
A preserved measure of the corresponding Kahan map
can be found with density function
\begin{align*}
 g&=1-\frac{1}{12}h^2 F(\cfig{\tthree})\\
 &+h^4\left(\frac{1}{36} F(\cfig{\ttwelve})-\frac{1}{72}F(\cfig{\televen})-\frac{1}{96}F(\cfig{\tsixteen})\right)\\
& +\frac{1}{384}h^6\left(
 F(\cfig{\tfortyseven})-2\,F(\cfig{\tthirtyeight})
 \right).
 \end{align*}
 The general expression for the density function has 15806 terms.
 Since the aromatic functions are not generally linearly independent, one has several equivalent representations, in the sixth order term we have for instance 
 \begin{align*}
  F(\cfig{\tfortyseven})-2\,F(\cfig{\tthirtyeight}) &=
\frac25\left(  F(\cfig{\tfortytwo})-3\,F(\cfig{\tthirtyeight}) \right) \\
&= \frac13\left(  F(\cfig{\tforty})-4\,F(\cfig{\tthirtyeight}) \right).
 \end{align*}
 Similarly, the fourth order term could be replaced by
 $$
 h^4\left(\frac{1}{12}F(\cfig{\tthirteen})-\frac{1}{24}F(\cfig{\televen})-\frac{1}{96}F(\cfig{\tsixteen})\right).
 $$
Note that the Kahan map of some special cases of the inhomogeneous Nambu system (4.4) for which the density factorises, were treated in \cite{celledoni19gai,celledoni22dad}.

\subsection{Homogeneous quadratic  divergence free systems in  $\mathbb{R}^3$ }
Finally, we consider the general case of quadratic homogeneous divergence free vector fields in 3 dimensions.
They can be written in the form
\begin{equation}
\dot{\mathbf{x}}=
\left(
\begin{array}{l}
\mathbf{x}^TA\mathbf{x} \\ \mathbf{x}^TB\mathbf{x} \\ \mathbf{x}^TC\mathbf{x}
\end{array}
\right)
\label{eq: ABC}
\end{equation}
for symmetric matrices $A, B$ and $C$, where $\mathbf{x}=(x,y,z)^T$. 

It can be shown that \eqref{eq: ABC} is divergence free if and only if
$$\left\{ a_{{1,1}}+b_{{1,2}}+c_{{1,3}}=0,\ a_{{2,1}}+b_{{2,2}}+c_{{2,3}}=0
,\ a_{{3,1}}+b_{{3,2}}+c_{{3,3}}=0 \right\}.$$

Based on multiple experiments with randomized $A, B$ and $C$, we pose the following conjecture:
\begin{conjecture}
Let $f$ be a quadratic, homogeneous, divergence free vector field in $\mathbb{R}^3$. Assume that
$F(\cfig{\tfive}) \ne 0$ and that
\begin{equation}\label{eq:GB}
     F(\cfig{\tseven}) = \alpha F(\cfig{\tfive})
\end{equation}
holds for some $\alpha\in \RR$. Then there is a preserved measure with density 
\begin{equation} \label{conjecture}
   g = 1 - \frac{3-\alpha}{24} h^2 F(\cfig{\tthree}) + \frac{\alpha^2}{24(3+\alpha)} h^4 F(\cfig{\televen}).
\end{equation}
\end{conjecture}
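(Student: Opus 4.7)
The plan is to verify that the proposed density $g$ satisfies the Darboux equation~\eqref{darbouxkahan} with cofactor $\det D\Phi_h$. I write $g=B(\gamma)$ with $\gamma(\one)=1$, $\gamma(\cfig{\tthree})=(\alpha-3)/12$, $\gamma(\cfig{\televen})$ to be determined, and $\gamma$ vanishing elsewhere, and use the framework of Section~3 to reduce the problem to checking $Q(\gamma)\in\ker F$ order by order in $h$.

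At orders $h$ and $h^2$ the only aromas contributing non-zero pairings in Table~\ref{tab: Qsmall} contain $\cfig{\tone}$ as a subgraph; since $\div f=0$ these lie in $\ker F$. At order $h^3$ the analysis of Subcase~1a following Theorem~\ref{thm: cond1} reduces the contribution to $\bigl(\gamma(\cfig{\tthree})+\tfrac14\bigr)F(\cfig{\tfive})-\tfrac{1}{12}F(\cfig{\tseven})$, which vanishes by the hypothesis $F(\cfig{\tseven})=\alpha F(\cfig{\tfive})$ together with $\gamma(\cfig{\tthree})=(\alpha-3)/12$, reproducing the coefficient $-(3-\alpha)/24$ displayed in~\eqref{conjecture}.

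The decisive calculation is order $h^4$. Using Lemmas~\ref{lem: mult}, \ref{lem: comp} and Theorem~\ref{thm: girardnewton}, I would tabulate $\langle Q(\gamma),\alpha\rangle$ for every four-vertex aroma $\alpha$, quotient by $\ker F$ (killing aromas containing $\cfig{\tone}$ or a vertex of indegree $\ge 3$), and then reduce further using the Girard--Newton identities available in dimension three for cyclic aroma products of total size four and the differential consequence of $F(\cfig{\tseven})=\alpha F(\cfig{\tfive})$ obtained by acting along $f$. After these reductions only one independent aromatic function should survive, and the coefficient in front of it gives a single linear equation for $\gamma(\cfig{\televen})$; its unique solution is expected to yield the coefficient $\alpha^2/(24(3+\alpha))$ in~\eqref{conjecture}, the factor $3+\alpha$ in the denominator explaining the need for a separate discussion of the degenerate case $\alpha=-3$.

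The main obstacle is to handle orders $h^k$ for $k\ge 5$, because $\phi$ is supported on tall trees of arbitrarily large size and so the pairings $\langle Q(\gamma),\alpha\rangle$ need not vanish for $|\alpha|$ large. I would proceed by induction on $|\alpha|$, arguing that any aroma with non-zero pairing against $Q(\gamma)$ and non-zero $F(\alpha)$ under the divergence-free, quadratic, three-dimensional assumptions must either acquire a $\cfig{\tone}$ subgraph (and hence land in $\ker F$), contain a cyclic subgraph of more than three vertices (reducible via Girard--Newton in $\mathbb{R}^3$), or be obtainable as a tall-tree extension of a four-vertex aroma whose contribution has already been cancelled and can be rewritten using $F(\cfig{\tseven})=\alpha F(\cfig{\tfive})$ and its iterates along $f$. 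Extracting a clean structural statement of this closure property, rather than merely verifying the vanishing computationally order by order, is the genuinely delicate step, and is presumably the reason the authors state the result only as a conjecture.
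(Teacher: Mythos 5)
This statement is posed in the paper as a \emph{conjecture}: the authors support it only by experiments with randomized coefficient matrices, and their accompanying Remark explains that a brute-force verification is intractable and that their own derivation of \eqref{conjecture} is incomplete. So there is no ``paper's proof'' to match, and the relevant question is whether your argument closes the gap. It does not. Your set-up (write $g=B(\gamma)$, demand $Q(\gamma)\in\ker F$ order by order) is the right framework and is exactly the one the paper uses in Section~\ref{subsec:DarbouxAromatic}; your treatment of orders $h$ through $h^3$ is correct and consistent with Theorem~\ref{thm: cond1}, including the identification $\gamma\big(\cfig{\tthree}\big)=(\alpha-3)/12$, which after dividing by $\sigma=2$ reproduces the coefficient $-(3-\alpha)/24$. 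But from order $h^4$ onward the proposal is a plan, not a proof: the ``decisive calculation'' is only described (``I would tabulate\dots is expected to yield''), and the induction you sketch for orders $h^k$, $k\ge 5$, rests on a ``closure property'' that you explicitly admit you cannot formulate, let alone prove. That closure property \emph{is} the conjecture; asserting that every surviving aroma must fall into one of three reducible classes is precisely what needs to be established.

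There is also a concrete missing ingredient that your reduction toolkit cannot supply. The paper's Remark states that the derivation of \eqref{conjecture} hinges on two relations among four-vertex aromas: $F(\cfig{\tthirteen})=\frac{\alpha}{3}\big(F(\cfig{\televen})+F(\cfig{\ttwelve})\big)$, which is provable from the hypotheses, and $F(\cfig{\ttwelve})=2\,F(\cfig{\televen})$, which the authors can only support by test examples. The second relation is between non-cyclic aromas, so it is not a Girard--Newton identity (those only constrain products of cyclic aromas in fixed dimension), and it is not an obvious differential consequence of $F(\cfig{\tseven})=\alpha F(\cfig{\tfive})$; none of the reduction mechanisms you list produces it. Until that relation (and the higher-order analogues needed for $k\ge 5$) is proved for all quadratic homogeneous divergence-free fields in $\mathbb{R}^3$ satisfying \eqref{eq:GB}, the order-$h^4$ coefficient $\alpha^2/(24(3+\alpha))$ cannot be derived, and the degenerate case $\alpha=-3$ that you flag remains untreated. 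In short: your proposal correctly reconstructs the authors' framework and their partial progress, but it does not supply the missing argument, and the statement remains open.
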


\begin{remark}
It might seem like a reasonable idea to try to prove the conjecture simply by brute force using a CAS such as Maple.
However, the calculations become intractable using the algorithm described in Section~1 when all the matrix elements are treated as free parameters.
One can prove that under the hypotheses of the conjecture 
$$
    F(\cfig{\tthirteen}) = \frac{\alpha}{3} \left(F(\cfig{\televen})+F(\cfig{\ttwelve})\right).
$$
Several test examples indicate that also the following relation holds
$$
F(\cfig{\ttwelve}) = 2\,F(\cfig{\televen}).
$$
These two relations are central in deriving the formula \eqref{conjecture}.
\end{remark}

\textbf{Acknowledgements.} The authors would like to thank the anonymous referees for helpful comments and suggestions. The authors would like to thank the Isaac Newton Institute for Mathematical Sciences, Cambridge, for support and hospitality during the programme \textit{Geometry, compatibility and structure preservation in computational differential equations} where work on this paper was undertaken. This work was supported by EPSRC grant no EP/R014604/1. Celledoni, McLachlan and Quispel acknowledge the Simons Foundation. EC and BO have received funding from the European Union’s Horizon 2020 research and innovation programme under the Marie Sk{\l}odowska-Curie grant agreement No 860124.

\providecommand{\bysame}{\leavevmode\hbox to3em{\hrulefill}\thinspace}
\providecommand{\MR}{\relax\ifhmode\unskip\space\fi MR }
\providecommand{\MRhref}[2]{%
  \href{http://www.ams.org/mathscinet-getitem?mr=#1}{#2}
}
\providecommand{\href}[2]{#2}

\end{document}